\documentclass[preprint,hidelinks,11pt]{elsarticle}
%% Use the option review to obtain double line spacing
%% \documentclass[authoryear,preprint,review,12pt]{elsarticle}
%% Use the options 1p,twocolumn; 3p; 3p,twocolumn; 5p; or 5p,twocolumn
%% for a journal layout:
%% \documentclass[final,1p,times]{elsarticle}
%% \documentclass[final,1p,times,twocolumn]{elsarticle}
%% \documentclass[final,3p,times]{elsarticle}
%% \documentclass[final,3p,times,twocolumn]{elsarticle}
%% \documentclass[final,5p,times]{elsarticle}
%% \documentclass[final,5p,times,twocolumn]{elsarticle}
%% For including figures, graphicx.sty has been loaded in
%% elsarticle.cls. If you prefer to use the old commands
%% please give \usepackage{epsfig}
\usepackage{psfrag,graphicx,amsmath,amsthm,amssymb,hyperref,color}
\journal{}
\usepackage{multirow}
\usepackage{subfigure}
\newtheorem{example}{Example}[section]

\newtheorem{theorem}{Theorem}[section]

\newtheorem{lemma}{Lemma}[section]
\newtheorem{corollary}{Corollary}[section]
\numberwithin{equation}{section}
% Triple bar Norm
\DeclareFontEncoding{FMS}{}{}
\DeclareFontSubstitution{FMS}{futm}{m}{n}
\DeclareFontEncoding{FMX}{}{}
\DeclareFontSubstitution{FMX}{futm}{m}{n}
\DeclareSymbolFont{fouriersymbols}{FMS}{futm}{m}{n}
\DeclareSymbolFont{fourierlargesymbols}{FMX}{futm}{m}{n}
\DeclareMathDelimiter{\tbar}{\mathord}{fouriersymbols}{152}{fourierlargesymbols}{147}
%%%%%%%%%%%%%%%%%%%%%%%%%%%%%%%%%%%%%%%%%%%%%%%%%%%%%%%%%%%

%%%%%% vec bold face %%%%%%%%%%%%%%%%%%%%%%%%%%%%%%%%%%%%%

\begin{document}
\begin{frontmatter}
%%%%%%%%%%%%%%%%%%%%%%%%%%%%%%%%%%%%%%%%%%%%%%%%
\title{\textbf{Local discontinuous Galerkin methods for the time tempered fractional diffusion equation}}

%%%%%%%%%%%%%%%%%%%%%%%%%%%%%%%%%%%%%%%
\author{Xiaorui Sun\fnref{label1}}

\author{Can Li\fnref{label1}}
\ead{mathlican@xaut.edu.cn}

\author{Fengqun Zhao \fnref{label1}}
\ead{fqzhao@xaut.edu.cn}

\address[label1]{Department of Applied Mathematics, School of Sciences,
 Xi'an University of Technology, Xi'an, Shaanxi 710054, P.R.China}

\begin{abstract}
In this article, we consider discrete schemes for a fractional diffusion equation involving a tempered fractional derivative in time. We present a semi-discrete scheme by using  the local discontinuous Galerkin (LDG) discretization in the spatial variables. We prove that the semi-discrete scheme is unconditionally stable in $L^2$ norm and convergence with optimal convergence rate $\mathcal{O}(h^{k+1})$. We develop a class of fully discrete LDG schemes by combining the weighted and shifted Lubich difference operators with respect to the time variable, and establish the  error estimates.
Finally, numerical experiments are presented to verify the theoretical results.
\end{abstract}

\begin{keyword}
%% keywords here, in the form: keyword \sep keyword
local discontinuous Galerkin methods\sep time tempered fractional diffusion equation\sep
 stability\sep convergence.
%% PACS codes here, in the form: \PACS code \sep code

%% MSC codes here, in the form: \MSC code \sep code
%% or \MSC[2008] code \sep code (2000 is the default)

\end{keyword}

\end{frontmatter}

%% main text
\section{Introduction}
\label{sec:0}

 In this paper we discuss a local discontinuous Galerkin method to
solve the following  time tempered fractional subdiffusion equation
\cite{Henry:06,Langlands:08}
\begin{equation}\label{orgproblema}
u_t(x,t)=\kappa_\alpha ~_{0}D_t^{1-\alpha,\lambda}\left(u_{xx}(x,t)\right)-\lambda u(x,t),
\end{equation}
where $u(x,t)$ represents the probability density of finding a
particle on $x$ at time $t$,  $\kappa_{\alpha}>0$ is the diffusion coefficient, and $_{0}D_t^{1-\alpha,\lambda}(0<\alpha<1)$ denotes the Riemann-Liouville tempered fractional derivative operator. The  Riemann-Liouville tempered fractional derivative  of order $\gamma ~(n-1<\gamma<n)$ is defined by \cite{Podlubny:99,Sabzikar:15,Li:14}
\begin{equation}\label{suTRLD}
_{0}D_t^{\gamma,\lambda}u(t)
= { _{0}D}_t^{n,\lambda}{_{0}I_t^{n-\gamma,\lambda}}u(t),
\end{equation}
where ${_{0}I_t^{n-\alpha,\lambda}}$ denotes the Riemann-Liouville fractional tempered integral \cite{Podlubny:99,Sabzikar:15,Li:14}
\begin{equation}\label{TRLI}
_{0}I_t^{\sigma,\lambda}u(t)=\frac{1}{\Gamma(\sigma)}\int_{0}^te^{-\lambda( t-s)}(t-s)^{\sigma-1}u(s)ds,\sigma=n-\alpha,
\end{equation}
and
\begin{equation}\label{ddlammda}
_{0}D_t^{n,\lambda}=\bigg(\frac{d}{d t}+\lambda \bigg)^n=\underbrace{\bigg(\frac{d}{d t}+\lambda \bigg)\cdots\bigg(\frac{d}{d t}+\lambda \bigg)}_{n\ times}.
\end{equation}
Tempered fractional calculus can be recognized as the generalization of fractional calculus. If we taking $\lambda=0$ in (\ref{suTRLD}), then the tempered fractional integral and derivative operators reduce  to the Riemann-Liouville
fractional integral ${_{0}I_t^{\sigma}}$ and derivative $_{0}D_t^{\gamma}$ operators, respectively.
\\
In recent years, many numerical methods such as finite difference methods \cite{Meerschaert:04,Sun:16,Murillo:15,Liu:15,Sousa:15,Gracia:15}, finite element methods \cite{Ervin:05,Wang:14,Li:15a,Zhao:15,Jin:16} and spectral methods \cite{Lin:07,Wang:16} have been developed for the numerical solutions of fractional subdiffusion and superdiffusion equations. Limited works are reported for solving the  tempered fractional differential equations, when compared with a large volume of literature on numerical solutions of fractional differential equations. In this literature, Baeumera and Meerschaert \cite{Baeumera:10} provide finite difference and particle tracking methods for solving the tempered fractional diffusion equation with the second order accuracy. The stability and convergence of the provided schemes are discussed.
Cartea and del-Castillo-Negrete \cite{Cartea:07a} derive a finite difference scheme to numerically solve a Black-Merton-Scholes model with tempered fractional derivatives.
 Hanert and Piret
\cite{Hanert:02} presented a Chebyshev pseudo-spectral scheme to solve the space-time tempered fractional diffusion equation, and proved that the method yields an exponential convergence rate. Zayernouri et al. \cite{Zayernouri:15}  derived an efficient Petrov-Galerkin method for solving tempered fractional ODEs by using the eigenfunctions
of tempered fractional Sturm-Liouville problems.  By using the weighted and shifted Gr$\ddot{\rm u}$nwald difference (WSGD) operators,
Li and Deng \cite{Li:14} designed a series of high order numerical schemes for the space tempered fractional diffusion equations. This technique is used to solve the tempered fractional Black-Scholes equation for European double barrier option \cite{Zhang:17}. Using the properties of generalized Laguerre functions, Huang et al. \cite{Huang:14} used Laguerre functions to approximate the substantial fractional ODEs on the half line. Li et al. \cite{Li:15}  analysed the well-posedness and developed the Jacobi-predictor-corrector algorithm for the tempered fractional ordinary differential equation.
Yu et al. \cite{Yu:06} developed the third and fourth order quasi-compact approximations for one and two dimensional space tempered fractional diffusion equations.
By using the weighted and shifted Gr\"{u}nwald-Letnikov formula suggested in \cite{Tian:15}, Hao et al. \cite{Hao:17} constructed a second-order approximation for the time tempered fractional diffusion equation.
By introducing fractional integral spaces, Zhao et al. \cite{Zhao:16} discussed spectral
Galerkin and Petrov-Galerkin methods for tempered fractional advection
problems and tempered fractional diffusion problems. \\
Recently, the high order and fast numerical methods for fractional differential equations  draw the wide interests of the researchers \cite{Tian:15,Chen:14,Chen:15,Wang:10,Jiang:17}. The local discontinuous Galerkin (LDG) method is one of the most popular methods in this literature. The  LDG method was first introduced to solve a convection-diffusion problems by Cockburn and Shu \cite{Cockburn:98}. These methods  have recently become increasingly popular
due to their flexibility for adaptive simulations, suitability for parallel computations, applicability to problems with discontinuous  solutions, and compatibility
with other numerical methods.
Nowadays, the LDG method has been successfully used in solving linear and
nonlinear elliptic, parabolic, hyperbolic equations, and
some mixed schemes. For the recent development of discontinuous Galerkin methods, see the the monograph and review articles \cite{Hesthaven:08,Cockburn:00,Xu:10,Shu:16}
 and the references therein. More recently, some researchers pay attention to solving the fractional partial equations by the LDG method.
For the time fractional  differential equations,  Mustapha and McLean \cite{Mustapha:10} employed a piecewise-linear, discontinuous Galerkin method for the
time discretization of a sub-diffusion equation.  A  LDG method for space discretization of a time fractional diffusion equation is discussed in  Xu and Zheng's work \cite{Xu:13}. By using L1 time discretization, Wei et al.\cite{Wei:12} developed an implicit fully discrete LDG finite element method for solving the time-fractional Schr\"{o}inger equation, Guo et al. \cite{Guo:16} studied a LDG method for some time fractional fourth-order differential equations. Liu et al. \cite{Liu:14} proposed LDG method combined with a third order weighted and shifted Gr\"{u}nwald difference operators  for a fractional subdiffusion equation.
For the space fractional  differential equations, based on two (or four) auxiliary
variables in one dimension (or two dimensions) and the Caputo derivative as the spatial
derivative, Ji and Tang \cite{Ji:12}  developed the high-order accurate Runge-Kutta LDG  methods for one- and two-dimensional space-fractional diffusion equations
with the variable diffusive coefficients.
Deng and Hesthaven \cite{Deng:13} have developed a LDG method
for  space fractional diffusion equation and given a fundamental frame to combine the LDG methods with fractional operators.
 \\
In this paper,  we will develop and analyze a new class of LDG method for the model (\ref{orgproblema}). Our new  method is based on a combination of the weighted and shifted Lubich difference approaches in
the time direction and a LDG method in the space direction. Stability and convergence of semi-discrete and fully discrete LDG schemes are rigorously analyzed. We  show  that the fully discrete scheme is unconditionally stable with convergence order of $\mathcal{O}(\tau^q+h^{k+1}),q=1,2,3,4,5$.\\
The rest of the article is organized as follows. In section \ref{sec:2}, we first consider the initial boundary value problem of the tempered fractional diffusion equation. Then in section \ref{sec:3} we construct a semi-discrete LDG method for the considered equation.
We perform the detailed theoretical analysis for the stability and error estimate
of the semi-discrete numerical scheme in this section.
In section \ref{sec:4}, we  apply the weighted and shifted Lubich difference approximation for the temporal discretization of the time fractional tempered equation.
The error estimates are provided for the full-discrete LDG scheme.
Finally, some numerical examples and physical simulations are presented in section \ref{sec:5} which confirm the theoretical statement.
 Some concluding remarks are given in the final section.
%%%%%%%%%%%%%%%%%%%%%%%%%%%%%%%%%%%%%%%%%.
%%%%%%%%%%%%%%%%%%%%%%%%%%%%%%%%%%%%%%%%%
\section{Initial boundary value problem of the tempered fractional diffusion equation}\label{sec:2}
Instead of  designing the numerical scheme of the equation (\ref{orgproblema}) directly, we constructing the numerical scheme for its equivalent form.
By simple calculation, the equation (\ref{orgproblema}) can be rewritten as
\begin{equation}\label{orgproblemaa}
\frac{d}{dt}\left( e^{\lambda t}u(x,t)\right)=\kappa_{\alpha} ~_{0}D_t^{1-\alpha}\left(e^{\lambda t}u_{xx}(x,t)\right).
\end{equation}
Performing Riemann-Liouville fractional integral $_{0}I_t^{1-\alpha}$ on both side of \eqref{orgproblema}, we arrive at
\begin{equation}\label{orgproblemba}
{_{0}^{C}D}_t^{\alpha}\big(e^{\lambda t}u(x,t)\big)=\kappa_{\alpha} e^{\lambda t}u_{xx}(x,t),
\end{equation}
where ${_{0}^{C}D}_t^{\alpha}$ denotes the Caputo  fractional derivative \cite{Podlubny:99}
    \begin{equation}\label{TCDcc}
      {_{0}^{C}D}_t^{\alpha}u(t)=\frac{1}{\Gamma(1-\alpha)}
      \int_{0}^t\frac{1}{(t-s)^{\alpha}}\frac{d u(s)}{d
          s}ds,0<\alpha<1.
    \end{equation}
In view of the Caputo tempered fractional derivative \cite{Sabzikar:15,Li:14}
    \begin{equation}\label{TCDcc}
      {_{0}^{C}D}_t^{\alpha,\lambda}u(t)=e^{-\lambda t}~{_{0}^{C}D}_t^{\alpha}\big(e^{\lambda t}u(t)\big)=\frac{e^{-\lambda t}}{\Gamma(1-\alpha)}\int_{0}^t\frac{1}{(t-s)^{\alpha}}\frac{d (e^{\lambda s}u(s))}{d
          s}ds,
    \end{equation}
we get the following time tempered fractional diffusion equation
\begin{equation}\label{1.1}
{_{0}^{C}\!D}_{t}^{\alpha,\lambda}u(x,t)=\kappa_{\alpha}u_{xx}(x,t).
\end{equation}
Let $\Omega= (0,L)$ be the space domain. We now
consider the initial boundary value problem of the fractional tempered diffusion equation (\ref{1.1})
in the domain $(0,L)\times(0,T]$, subject to the initial condition
\begin{equation}\label{intinalvalue}
u(x,0)=u_0(x),~x\in \Omega£¬
\end{equation}
and the boundary conditions
\begin{equation}\label{Bvalue}
u(x,t)=u(x+L,t),L>0,t\in (0,T].
\end{equation}
%%%%%%%%%%%%%%%%%%%%%%%%%%%
\begin{lemma}  \cite{Alikhanov:01} \label{lemma1}
For function $u(t)$absolutely continuous on $[0,t]$, holds the the following
 inequality
\begin{equation}\label{3.1inq}
 \frac{1}{2}{_{0}^{C}\!D}_{t}^{\alpha}(u^2(t))\leq
 u(t){_{0}^{C}\!D}_{t}^{\alpha}(u(t)),0<\alpha<1.
\end{equation}
\end{lemma}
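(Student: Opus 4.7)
The plan is to prove the inequality by computing the difference
$$
D(t) := u(t)\,{_{0}^{C}\!D}_{t}^{\alpha}u(t) - \tfrac{1}{2}\,{_{0}^{C}\!D}_{t}^{\alpha}\bigl(u^{2}(t)\bigr)
$$
explicitly and showing that it equals a manifestly nonnegative expression. First I would substitute the definition \eqref{TCDcc} of the Caputo derivative into both terms, combining the integrals over the common weight $(t-s)^{-\alpha}/\Gamma(1-\alpha)$. Since $(u^{2})'(s)=2u(s)u'(s)$ and $u(t)$ is constant in $s$, the two numerators collapse to $(u(t)-u(s))u'(s)$, giving
$$
D(t) \;=\; \frac{1}{\Gamma(1-\alpha)}\int_{0}^{t}\frac{(u(t)-u(s))\,u'(s)}{(t-s)^{\alpha}}\,ds.
$$

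The key algebraic observation is then that $(u(t)-u(s))u'(s)=-\tfrac{1}{2}\tfrac{d}{ds}(u(t)-u(s))^{2}$, so $D(t)$ is proportional to a weighted integral of the derivative of a perfect square. I would apply integration by parts with $f(s)=(u(t)-u(s))^{2}$ and $g(s)=(t-s)^{-\alpha}$, noting that $g'(s)=\alpha(t-s)^{-\alpha-1}$. The boundary term at $s=0$ produces $(u(t)-u(0))^{2}/t^{\alpha}\geq 0$, the boundary term at $s=t$ is expected to vanish, and the volume term contributes $\alpha\int_{0}^{t}(u(t)-u(s))^{2}(t-s)^{-\alpha-1}\,ds\geq 0$. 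Collecting pieces should yield
$$
D(t) \;=\; \frac{1}{2\Gamma(1-\alpha)}\left[\frac{(u(t)-u(0))^{2}}{t^{\alpha}} \;+\; \alpha\int_{0}^{t}\frac{(u(t)-u(s))^{2}}{(t-s)^{\alpha+1}}\,ds\right],
$$
which is a sum of two nonnegative terms, giving the desired inequality at once.

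The main obstacle I expect is justifying that the boundary term at $s=t$ in the integration by parts vanishes, since the factor $(t-s)^{-\alpha}$ is singular there. Absolute continuity of $u$ yields $u(t)-u(s)=\int_{s}^{t}u'(r)\,dr\to 0$ as $s\to t^{-}$, but one must show the squared increment is $o((t-s)^{\alpha})$. The cleanest route is a density argument: prove the identity for $u\in C^{1}[0,t]$, where $(u(t)-u(s))^{2}=O((t-s)^{2})$ makes the limit transparent, and then extend to absolutely continuous $u$ by approximating $u'$ in $L^{1}[0,t]$ and invoking dominated convergence on both sides. Alternatively, one can truncate the interval to $[0,t-\varepsilon]$, perform the integration by parts there, and pass $\varepsilon\to 0^{+}$ using a Hardy-type estimate on the singular weight. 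Either route avoids imposing pointwise smoothness beyond what the hypothesis provides.
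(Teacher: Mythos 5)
The paper does not prove this lemma at all: it is quoted directly from the cited reference \cite{Alikhanov:01}, so there is no in-paper argument to compare against. Your proof is correct and is essentially Alikhanov's original one — rewriting the difference as $\frac{1}{\Gamma(1-\alpha)}\int_0^t (u(t)-u(s))u'(s)(t-s)^{-\alpha}\,ds$, using $(u(t)-u(s))u'(s)=-\tfrac12\tfrac{d}{ds}(u(t)-u(s))^2$ and integrating by parts to exhibit the nonnegative boundary and volume terms — and your attention to the singular boundary term at $s=t$ (via truncation at $t-\varepsilon$ or a $C^1$-density argument) is exactly the regularity care the terse statement glosses over.
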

For the initial-boundary value problem (\ref{1.1})-(\ref{Bvalue}), we have the following stability.
%%%%%%%%%%%%%%%%%%%%%%%%%%%%%
\begin{theorem}\label{stabilityLABCL2}
The solution $u(x, t)$ of the initial-boundary value  problem (\ref{1.1})-(\ref{Bvalue}) holds the prior estimate
\begin{align} \label{IQ1thm}
 \|u(\cdot,t)\|^2+2\kappa_\alpha~{_{0}I}_t^{\alpha,2\lambda}\|u_x(\cdot,t)\|^{2}
 \leq e^{-2\lambda t}\|u_0\|^{2},
\end{align}
where ${_{0}I}_t^{\alpha,2\lambda}$ denotes  the Riemann-Liouville fractional tempered integral operator defined by (\ref{TRLI}).
\end{theorem}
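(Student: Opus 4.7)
The plan is to reduce the tempered problem to a standard Caputo problem by the exponential substitution $v(x,t) := e^{\lambda t}u(x,t)$ and then combine an $L^2$ energy argument with Alikhanov's inequality.

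First I would rewrite \eqref{1.1} in terms of $v$. By the definition of the Caputo tempered derivative we have ${_{0}^{C}D}_t^{\alpha,\lambda}u=e^{-\lambda t}{_{0}^{C}D}_t^{\alpha}(e^{\lambda t}u)=e^{-\lambda t}{_{0}^{C}D}_t^{\alpha}v$, so \eqref{1.1} becomes
\begin{equation*}
{_{0}^{C}D}_t^{\alpha}v(x,t)=\kappa_\alpha v_{xx}(x,t), \qquad v(x,0)=u_0(x),
\end{equation*}
still with spatial periodicity on $\Omega=(0,L)$. Multiplying by $v$ and integrating in $x$, periodicity together with integration by parts gives $\int_\Omega v\,v_{xx}\,dx=-\|v_x\|^{2}$, hence
\begin{equation*}
\int_\Omega v\,{_{0}^{C}D}_t^{\alpha}v\,dx=-\kappa_\alpha\|v_x(\cdot,t)\|^{2}.
\end{equation*}

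Next I would apply Lemma \ref{lemma1} pointwise in $x$ and integrate over $\Omega$; since the Caputo operator commutes with the spatial integral, this yields
\begin{equation*}
\tfrac{1}{2}{_{0}^{C}D}_t^{\alpha}\|v(\cdot,t)\|^{2}+\kappa_\alpha\|v_x(\cdot,t)\|^{2}\leq 0.
\end{equation*}
Acting by ${_{0}I}_t^{\alpha}$ on both sides (which converts ${_{0}^{C}D}_t^{\alpha}\|v\|^{2}$ back to $\|v(\cdot,t)\|^{2}-\|v(\cdot,0)\|^{2}$ and is order-preserving), I obtain
\begin{equation*}
\|v(\cdot,t)\|^{2}+2\kappa_\alpha\,{_{0}I}_t^{\alpha}\|v_x(\cdot,t)\|^{2}\leq \|u_0\|^{2}.
\end{equation*}

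Finally I would translate back to $u$. Using $\|v(\cdot,t)\|^{2}=e^{2\lambda t}\|u(\cdot,t)\|^{2}$ and the key identity
\begin{equation*}
{_{0}I}_t^{\alpha}\bigl(e^{2\lambda t}\|u_x(\cdot,t)\|^{2}\bigr)=e^{2\lambda t}\,{_{0}I}_t^{\alpha,2\lambda}\|u_x(\cdot,t)\|^{2},
\end{equation*}
which follows directly from the definition \eqref{TRLI} by pulling an $e^{-2\lambda(t-s)}$ out of the kernel, dividing through by $e^{2\lambda t}$ gives \eqref{IQ1thm}. The main obstacle I anticipate is bookkeeping in the last step: one has to verify the exponential rescaling identity carefully so that the factor $e^{-\lambda(t-s)}$ in the Riemann--Liouville tempered kernel emerges exactly, and to check that Lemma \ref{lemma1}, which is stated for scalar-valued absolutely continuous functions, legitimately integrates in $x$ — the latter is standard under the regularity that makes the LDG analysis meaningful.
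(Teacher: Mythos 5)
Your proposal is correct and follows essentially the same route as the paper's proof: the substitution $v=e^{\lambda t}u$, the $L^2$ energy argument with periodicity, Lemma \ref{lemma1}, application of ${_{0}I}_t^{\alpha}$ via the composition identity, and the transformation back to $u$. The only (welcome) difference is that you spell out the rescaling identity ${_{0}I}_t^{\alpha}\bigl(e^{2\lambda t}\|u_x\|^{2}\bigr)=e^{2\lambda t}\,{_{0}I}_t^{\alpha,2\lambda}\|u_x\|^{2}$ explicitly, a step the paper leaves implicit in its final line.
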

\begin{proof}
Taking $v(x,t)=e^{\lambda t}u(x,t)$ in (\ref{1.1}), we have
\begin{align}
{_{0}^{C}D}_t^{\gamma}v(x,t)=\kappa_\alpha v_{xx}(x,t),   &x\in \Omega, \;t>0.   \label{exacta1eqiq}
\end{align}
Taking  inner product in equation (\ref{exacta1eqiq}) with the space variable, we get
\begin{equation}\label{integralAiq}
   \big( v(\cdot,t),{_{0}^{C}D}_t^{\gamma}v(\cdot,t)\big)_\Omega=-\kappa_\alpha\big( v_x(x,t), v_x(x,t)\big)_\Omega +\kappa_\alpha v_x(x,t) v(x,t)\big|^{L}_{0},
\end{equation}
where $(u(x),v(x))_\Omega=\int_\Omega u(x)v(x)dx$. Denoting the $L^2$-norm as $\|u\|=(u(x),u(x))_\Omega^{1/2}$,
using the boundary conditions (\ref{Bvalue}) and using the inequality  (\ref{3.1inq}), we get
\begin{align}\label{fiintegralCiq}
{_{0}^{C}D}_t^{\alpha}\| v(\cdot,t)\|^2+2\kappa_\alpha\| v_x(\cdot,t)\|^2\leq 0.
\end{align}
By applying the fractional integral  operator ${_{0}I}_t^{\alpha}$ to both sides of inequality (\ref{fiintegralCiq}),
using the composite properties of fractional calculus \cite{Podlubny:99}
$${_{0}I}_t^{\alpha}~{_{0}D}_t^{\alpha}(w(t))=w(t)-w(0),$$
 we
obtain
\begin{align}\label{finalest}
\|v(\cdot,t)\|^2+2\kappa_\alpha~{_{0} I}_t^{\alpha}\|v_x(\cdot,t)\|^{2}\leq \|v_0\|^2.
\end{align}
Taking $u(x,t)=e^{-\lambda t}v(x,t)$ in (\ref{finalest}) we get  the desired estimate \eqref{IQ1thm}.
\end{proof}
%%%%%%%%%%%%%%%%%%%%%%%%%%%%%%%%%%%%%%%%%%
%%%%%%%%%%%%%%%%%%%%%%%%%%%%%%%%%%%%%%%%%%
\section{The semi-discrete LDG scheme}
\label{sec:3}
In this section, we present and analyze a local discontinuous
Galerkin method for the equation (\ref{1.1}) subjects to
the initial condition (\ref{intinalvalue}) and the periodic boundary
conditions (\ref{Bvalue}).
For the interval $\Omega=[0,L]$, we divide it into $N$ cells as follows $$0=x_{1/2}<x_{3/2}<\cdots<x_{N-1/2}<x_{N+1/2}=L.$$
We denote
$I_j=(x_{j-1/2},x_{j+1/2}),x_{j}=(x_{j-1/2}+x_{j+1/2})/2,$
and
$h_j=x_{j+1/2}-x_{j-1/2},h={\rm max}_{1\leq j\leq N}h_j.$
Furthermore, we define the mesh $\mathcal{T}=\{I_j=(x_{j-1/2},x_{j+1/2}),j=1,2,...,N\}$.
The finite element space is defined by
$$V_h=\big\{v: v\mid_{I_j}\in
P^k(I_j),j=1,2,\cdots,N\big\},$$ where $P^k(I_j)$ denotes the set of all
 polynomials of  degree at most $k$ on cell $I_j$. We define
$u^{-}_{j+1/2},u^{+}_{j+1/2}$ represent the values
of $u$ at $x_{j+1/2}$ from the left cell $I_{j}$ and the right cell $I_{j+1}$,
respectively.\\
To define the local discontinuous Galerkin method, we rewrite (\ref{1.1}) as a first-order system
\begin{equation}\label{2.3}
\begin{array}{l}
\displaystyle
 {_{0}^{C}\!D}_{t}^{\alpha,\lambda}u(x,t)-\kappa_{\alpha}p_x=0,\\
\displaystyle
p-u_x=0.
\end{array}
\end{equation}
Now we can define the local
discontinuous Galerkin method to the system (\ref{2.3}).
Find $u_h,p_h$,  for all $v_h, w_h\in V_h$ such that
\begin{equation}\label{semi-discreteLDG}
\begin{array}{l}
\displaystyle
\big({_{0}^{C}\!D}_{t}^{\alpha,\lambda}u_h, v_h\big)_{I_j}+\kappa_{\alpha}\left(\big(p_h,(v_h)_x\big)_{I_j} -(\widehat{p}_hv_h^{-})_{j+\frac{1}{2}}
+(\widehat{p}_hv_h^{+})_{j-\frac{1}{2}}\right)=0, \\
\displaystyle
\big(p_h ,w_h\big)_{I_j}+\big(u_h ,(w_h)_x\big)_{I_j} - (\widehat{u}_h w_h^{-})_{j+\frac{1}{2}}
+(\widehat{u}_h w_h^{+})_{j-\frac{1}{2}}=0.
\end{array}
\end{equation}
The 'hat' terms in (\ref{semi-discreteLDG}) are the numerical probability density
fluxes. We chose the alternating numerical fluxes \cite{Cockburn:00,Kirby:14}  as
\begin{equation}\label{flux2.5}
\widehat{u}_h=u_h^{-},\qquad\widehat{p}_h=p_h^{+}.
\end{equation}
%%%%%%%%%%%%%%%%%%%%%%%%%%%%%%%%%%%
\subsection{Stability  analysis of the semi-discrete LDG scheme}\label{sec:3.1}
In this section, we present the stability and convergence analysis for
the semi-discrete scheme (\ref{semi-discreteLDG}) in $L^2$ sense. To do so, we follow
the technique used by Cockburn and Shu \cite{Cockburn:98}.
%%%%%%%%%%%%%%%%%%%%%%%%%%%%%%%%%%%%%%
\begin{lemma} \label{lemma1}
For function $u(t) $absolutely continuous on $[0,t]$, holds the the following
 inequality
\begin{equation}\label{3.1inportantiq}
e^{-\lambda t}\frac{1}{2}  \; _{0}^{C}\!D_{t}^{\alpha,\lambda} \big(u(t)\big)^2 \leq u(t)~ _{0}^{C}\!D_{t}^{\alpha,\lambda}\big(u(t)\big),  0< \alpha <1,\lambda>0.
\end{equation}
\end{lemma}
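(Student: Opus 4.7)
The plan is to reduce this tempered Alikhanov-type estimate to the classical version in Lemma~2.1 via the exponential substitution that underlies the definition of the tempered Caputo derivative. Set $v(t):=e^{\lambda t}u(t)$; since $u$ is absolutely continuous on $[0,t]$, so is $v$. From the defining identity ${_0^C D}_t^{\alpha,\lambda}u(t)=e^{-\lambda t}\,{_0^C D}_t^{\alpha}v(t)$, multiplying by $u(t)=e^{-\lambda t}v(t)$ yields
\[
u(t)\,{_0^C D}_t^{\alpha,\lambda}u(t)=e^{-2\lambda t}\,v(t)\,{_0^C D}_t^{\alpha}v(t).
\]

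Next, apply Lemma~2.1 to the absolutely continuous function $v$, obtaining $\tfrac{1}{2}\,{_0^C D}_t^{\alpha}(v^{2}(t))\le v(t)\,{_0^C D}_t^{\alpha}v(t)$. Multiplying through by $e^{-2\lambda t}$ and combining with the previous display produces
\[
\tfrac{1}{2}\,e^{-2\lambda t}\,{_0^C D}_t^{\alpha}(v^{2}(t))\le u(t)\,{_0^C D}_t^{\alpha,\lambda}u(t).
\]
The final step is to identify the left-hand side above with $\tfrac{e^{-\lambda t}}{2}\,{_0^C D}_t^{\alpha,\lambda}(u^{2}(t))$ so as to recover the stated inequality.

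\textbf{Main obstacle.} This final identification is the only genuinely subtle step: squaring $v=e^{\lambda t}u$ doubles the tempering exponent, since $v^{2}=e^{2\lambda t}u^{2}$, so the direct substitution naturally produces the ``$2\lambda$-tempered'' derivative of $u^{2}$ rather than the ``$\lambda$-tempered'' derivative the statement requires. Closing this gap will require either a kernel-level comparison, exploiting $\lambda>0$, between the Riemann--Liouville integrals of $(e^{\lambda s}u^{2}(s))'$ and $(e^{2\lambda s}u^{2}(s))'$ against the kernel $(t-s)^{-\alpha}/\Gamma(1-\alpha)$, or an alternative substitution such as $w(t):=e^{\lambda t/2}u(t)$, so that $w^{2}=e^{\lambda t}u^{2}$ matches the tempering exponent on the left-hand side exactly, at the cost of carrying the residual $e^{\pm\lambda t/2}$ factors on the right-hand side and controlling them. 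I expect this algebraic reconciliation---not a deeper analytic ingredient---to be the principal subtlety of the proof; once it is resolved, the desired inequality follows immediately from the reduction to Lemma~2.1.
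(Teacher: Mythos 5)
Your reduction---substituting $v(t)=e^{\lambda t}u(t)$ into the Alikhanov inequality \eqref{3.1inq} and multiplying by $e^{-2\lambda t}$---is exactly the paper's entire proof: the paper stops at the intermediate inequality
\begin{equation*}
\tfrac12\,e^{-2\lambda t}\,{_{0}^{C}\!D}_{t}^{\alpha}\big(e^{\lambda t}u(t)\big)^2\;\le\; u(t)\,{_{0}^{C}\!D}_{t}^{\alpha,\lambda}u(t)
\end{equation*}
and treats the lemma as proved, so the ``main obstacle'' you isolate is never addressed there. Your diagnosis of that obstacle is correct: since $\big(e^{\lambda t}u\big)^2=e^{2\lambda t}u^2$, what the substitution actually establishes is $\tfrac12\,{_{0}^{C}\!D}_{t}^{\alpha,2\lambda}\big(u^2(t)\big)\le u(t)\,{_{0}^{C}\!D}_{t}^{\alpha,\lambda}u(t)$, i.e.\ the $2\lambda$-tempered Caputo derivative of $u^2$, whereas the left-hand side of \eqref{3.1inportantiq}, read with the standard definition, is $\tfrac12 e^{-2\lambda t}\,{_{0}^{C}\!D}_{t}^{\alpha}\big(e^{\lambda t}u^2\big)$; these differ precisely by replacing $e^{2\lambda s}u^2$ with $e^{\lambda s}u^2$ under the untempered derivative.

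However, neither of your proposed repairs can close this gap, because the inequality as literally stated in \eqref{3.1inportantiq} does not appear to hold in general. The kernel comparison you would need amounts to ${_{0}^{C}\!D}_{t}^{\alpha}(e^{\lambda s}u^2)\le {_{0}^{C}\!D}_{t}^{\alpha}(e^{2\lambda s}u^2)$, and nonnegativity of $(e^{2\lambda s}-e^{\lambda s})u^2$ with value $0$ at $s=0$ gives no sign for its Caputo derivative at $t$; the $e^{\lambda t/2}$ substitution leaves a comparison between ${_{0}^{C}\!D}_{t}^{\alpha,\lambda/2}u$ and ${_{0}^{C}\!D}_{t}^{\alpha,\lambda}u$ that is likewise not sign-controlled. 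In fact, taking $u(s)=e^{-\lambda s}v(s)$ with $v\equiv A$ on $[0,t-\epsilon]$ and decreasing linearly to $a\in(0,A)$ at $s=t$, both sides of \eqref{3.1inportantiq} behave like constants times $\epsilon^{-\alpha}$, with the left side proportional to $-e^{-3\lambda t}(A-a)\big(\tfrac{a}{1-\alpha}+\tfrac{A-a}{2-\alpha}\big)$ and the right side to $-e^{-2\lambda t}\tfrac{a(A-a)}{1-\alpha}$, so for moderate $\lambda t$ the claimed direction reverses as $\epsilon\to0$. The honest conclusion is that your argument proves the $2\lambda$-tempered form $\tfrac12\,{_{0}^{C}\!D}_{t}^{\alpha,2\lambda}(u^2)\le u\,{_{0}^{C}\!D}_{t}^{\alpha,\lambda}u$, which is exactly what the substitution (and the paper's own display) delivers, and which is also all that the subsequent analysis needs: applying ${_{0}I}_t^{\alpha,2\lambda}$ and its composition rule reproduces the estimate of Theorem~\ref{stabilityLABCL2} and implies the bound \eqref{stability}. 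So rather than trying to force the identification, the lemma should be restated in the $2\lambda$ form; with that restatement your proof is complete and coincides with the paper's intended argument.
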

\begin{proof}
Taking $v(t)=e^{\lambda t}u(t)$ in the inequality (\ref{3.1inq}), we have
\begin{equation}
\frac{1}{2}  \; _{0}^{C}\!D_{t}^{\alpha} \big(e^{\lambda t}u(t)\big)^2 \leq e^{\lambda t}u(t)~ _{0}^{C}\!D_{t}^{\alpha}\big(e^{\lambda t}u(t)\big) .
\end{equation}
which leads
\begin{equation}
 e^{-2\lambda t}\frac{1}{2}  \; _{0}^{C}\!D_{t}^{\alpha} \big(e^{\lambda t}u(t)\big)^2
 \leq u(t)~ _{0}^{C}\!D_{t}^{\alpha,\lambda}\big(u(t)\big).
\end{equation}
\end{proof}
%%%%%%%%%%%%%%%%%%%%%%%%%%%%%%%%%%%%%%%%%%%
\begin{lemma}\label{lemma2}
 Let $u_h,p_h$ to be the
solution of semi-discrete LDG scheme (\ref{semi-discreteLDG}) with the flux $\widehat{u}_h$ ,$\widehat{p}_h$
defined in (\ref{flux2.5}), holds
\begin{equation}\label{3.2}
 \big(u_h,\,{_{0}^{C}\!D}_{t}^{\alpha,\lambda}u_h\big)_{\Omega}
 +\kappa_{\alpha}\|p_h\|^2=0.
\end{equation}
\end{lemma}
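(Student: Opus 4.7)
The plan is to carry out the standard LDG energy identity of Cockburn--Shu, adapted to the present tempered setting. First I would test the two equations in the semi-discrete scheme (\ref{semi-discreteLDG}) against $v_h = u_h$ and $w_h = \kappa_\alpha p_h$, respectively, sum each identity over all cells $I_j$, $j = 1, \ldots, N$, and then add them. The time-derivative contribution is exactly $\big(u_h, {_{0}^{C}\!D}_t^{\alpha,\lambda} u_h\big)_\Omega$, and the mass term of the second equation delivers $\kappa_\alpha \|p_h\|^2$; these are the two terms asserted in (\ref{3.2}). Everything else, namely the two coupling volume integrals $\big(p_h, (u_h)_x\big)_{I_j}$ and $\big(u_h, (p_h)_x\big)_{I_j}$ together with the four numerical-flux sums, must therefore cancel exactly.

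The key tool is the cellwise integration-by-parts identity
\begin{equation*}
\big(p_h, (u_h)_x\big)_{I_j} + \big(u_h, (p_h)_x\big)_{I_j}
= \big(p_h^- u_h^-\big)_{j+1/2} - \big(p_h^+ u_h^+\big)_{j-1/2}.
\end{equation*}
Substituting this together with the alternating flux choices $\widehat u_h = u_h^-$, $\widehat p_h = p_h^+$ from (\ref{flux2.5}), I would reorganise the resulting double sum by interface. Each interior node $x_{j+1/2}$ inherits three contributions as the right endpoint of $I_j$ and three as the left endpoint of $I_{j+1}$; a short arithmetic reduces them to
\begin{equation*}
-\big(p_h^+ u_h^-\big)_{j+1/2} + \big(u_h^- p_h^+\big)_{j+1/2} = 0.
\end{equation*}
The two remaining contributions at $x_{1/2}$ and $x_{N+1/2}$ cancel by virtue of the periodic boundary conditions (\ref{Bvalue}), so the total flux contribution vanishes, and (\ref{3.2}) follows.

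The only real obstacle is the sign bookkeeping at each interface: one must track all six boundary contributions and verify that the asymmetric alternating choice of fluxes produces complete cancellation. This is a standard LDG calculation and is precisely the reason the pair $(\widehat u_h, \widehat p_h) = (u_h^-, p_h^+)$ is preferred. No fractional-calculus argument enters at this stage, since the operator ${_{0}^{C}\!D}_t^{\alpha,\lambda}$ is simply carried through as a linear functional of time; the tempered Alikhanov inequality proved just above will only be invoked later, when (\ref{3.2}) is combined with a lower bound for the fractional-derivative term to derive $L^2$ stability.
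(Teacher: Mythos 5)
Your proposal is correct and follows essentially the same route as the paper: test the two equations of (\ref{semi-discreteLDG}) with $v_h=u_h$ and $w_h=\kappa_\alpha p_h$, sum over cells, and let the alternating fluxes (\ref{flux2.5}) together with the periodic boundary conditions (\ref{Bvalue}) annihilate the volume coupling terms and flux sums. The only difference is one of presentation: you make explicit the cellwise integration-by-parts identity and the interface-by-interface cancellation that the paper's proof leaves implicit when it passes from (\ref{3.4}) to (\ref{3.2}), and your bookkeeping there is accurate.
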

%%%%%%%%%%%%%%%%%%%%%%%%%%%%%%%%%%%%%%
\begin{proof} For simplicity, we denote
\begin{eqnarray}\label{3.3}
B(u_h,p_h;v_h,w_h)&:=&\big({_{0}^{C}\!D}_{t}^{\alpha,\lambda}(u_h),v_h\big)_{\Omega}
 + \kappa_{\alpha}\big(\big(p_h,(v_h)_x\big)_{\Omega}   \nonumber\\
&-&\sum_{j=1}^{N}\big[(\widehat{p}_h)_{j+\frac{1}{2}}v^{-}_{j+\frac{1}{2}}-(
\widehat{p}_h)_{j-\frac{1}{2}}(v_h)^{+}_{j-\frac{1}{2}}\big]\big) \\
&+& \big(u_h ,(w_h)_x\big)_{\Omega}+\big(p_h, w_h\big)_{\Omega}   \nonumber \\
&-&\sum_{j=1}^{N}\big[(\widehat{u}_h)_{j+\frac{1}{2}} (w_h)^{-}_{j+\frac{1}{2}}
-(\widehat{p}_h)_{j-\frac{1}{2}}(w_h)^{+}_{j-\frac{1}{2}}\big]
\nonumber.
\end{eqnarray}
If we take $v_h=u_h, w_h=\kappa_{\alpha}p_h$ in (\ref{3.3}), we have
\begin{eqnarray}\label{3.4}
B(u_h,p_h;u_h,\kappa_\alpha p_h)&=&
\big(u_h,\,{_{0}^{C}\!D}_{t}^{\alpha,\lambda}u_h\big)_{\Omega}+ \kappa_{\alpha}\bigg(\big(p_h,(u_h)_x\big)_{\Omega}
+ \big(p_h,p_h\big)_{\Omega}  \nonumber\\
&-&\sum_{j=1}^{N}\big[(\widehat{p}_h)_{j+\frac{1}{2}}(u_h^{-})_{j+\frac{1}{2}} - (
\widehat{p}_h)_{j-\frac{1}{2}}(u_h^{+})_{j-\frac{1}{2}}\big] \\
&+&\big(u_h ,(p_h)_x\big)_{\Omega}-\sum_{j=1}^{N}\big[(\widehat{u}_h)_{j+\frac{1}{2}} (p_h)^{-}_{j+\frac{1}{2}}
-(\widehat{u}_h)_{j-\frac{1}{2}}(p_h)^{+}_{j-\frac{1}{2}}\big]\bigg)  \nonumber\\
&=&0\nonumber.
\end{eqnarray}
Combining the numerical flux defined by (\ref{flux2.5}) and periodic boundary
conditions (\ref{Bvalue}) we arrive at (\ref{3.2}).
\end{proof}
%%%%%%%%%%%%%%%%%%%%%%%%%%%%%%%%%%%%
\begin{corollary}($L^2$-stability) The semi-discrete LDG scheme (\ref{semi-discreteLDG}) with the flux choice (\ref{flux2.5}) is $L^2$-stable, i.e.
\begin{equation}\label{stability}
 \|u_h(\cdot,t)\|^2
+2\kappa_\alpha~{_{0}I}_t^{\alpha,\lambda}\big( e^{\lambda t} \|p_h(\cdot,t)\|^2\big)
\leq e^{-\lambda t}\|u_h(\cdot,0)\|^2.
\end{equation}
\end{corollary}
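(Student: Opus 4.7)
The plan is to mimic the structure of the proof of Theorem \ref{stabilityLABCL2}, but replace the analytical energy identity with the discrete identity supplied by Lemma \ref{lemma2}, and use the tempered chain-rule inequality of Lemma \ref{lemma1} in the role that the untempered Alikhanov inequality played there. Concretely, I would begin from the conclusion of Lemma \ref{lemma2},
\begin{equation*}
\bigl(u_h,\,{_{0}^{C}\!D}_{t}^{\alpha,\lambda}u_h\bigr)_{\Omega}+\kappa_{\alpha}\|p_h\|^{2}=0,
\end{equation*}
and then apply Lemma \ref{lemma1} pointwise in $x$ to the integrand on the left. Since the tempered Caputo operator acts only in $t$, I can interchange it with the spatial integral defining $\|\cdot\|^{2}$, yielding
\begin{equation*}
\tfrac{1}{2}e^{-\lambda t}\,{_{0}^{C}\!D}_{t}^{\alpha,\lambda}\|u_h(\cdot,t)\|^{2}\;\leq\;\bigl(u_h,\,{_{0}^{C}\!D}_{t}^{\alpha,\lambda}u_h\bigr)_{\Omega}.
\end{equation*}
Combining these two lines and multiplying through by $2e^{\lambda t}$ gives the differential inequality
\begin{equation*}
{_{0}^{C}\!D}_{t}^{\alpha,\lambda}\|u_h(\cdot,t)\|^{2}+2\kappa_{\alpha}e^{\lambda t}\|p_h(\cdot,t)\|^{2}\;\leq\;0.
\end{equation*}

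The final step is to apply the tempered Riemann--Liouville fractional integral ${_{0}I}_{t}^{\alpha,\lambda}$ to both sides, paralleling the move from \eqref{fiintegralCiq} to \eqref{finalest} in Theorem \ref{stabilityLABCL2}. To make this precise I would first record the tempered composition identity
\begin{equation*}
{_{0}I}_{t}^{\alpha,\lambda}\,{_{0}^{C}\!D}_{t}^{\alpha,\lambda}w(t)=w(t)-e^{-\lambda t}w(0),
\end{equation*}
which follows from the substitution $w(t)=e^{-\lambda t}v(t)$: under this substitution the tempered operators reduce to their untempered counterparts acting on $v$, and the standard composition ${_{0}I}_{t}^{\alpha}{_{0}^{C}\!D}_{t}^{\alpha}v=v(t)-v(0)$ cited from \cite{Podlubny:99} supplies the result. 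Applying this identity with $w(t)=\|u_h(\cdot,t)\|^{2}$, and using that ${_{0}I}_{t}^{\alpha,\lambda}$ preserves inequalities of nonnegative functions (its kernel $e^{-\lambda(t-s)}(t-s)^{\alpha-1}/\Gamma(\alpha)$ is nonnegative), I obtain
\begin{equation*}
\|u_h(\cdot,t)\|^{2}-e^{-\lambda t}\|u_h(\cdot,0)\|^{2}+2\kappa_{\alpha}\,{_{0}I}_{t}^{\alpha,\lambda}\bigl(e^{\lambda t}\|p_h(\cdot,t)\|^{2}\bigr)\;\leq\;0,
\end{equation*}
which is exactly the claimed estimate \eqref{stability}.

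The only substantive obstacle, beyond bookkeeping, is verifying the tempered composition identity and the commutation of ${_{0}^{C}\!D}_{t}^{\alpha,\lambda}$ with the spatial inner product. Both are handled uniformly by the change of variables $v=e^{\lambda t}u_h$, which reduces everything to standard Caputo calculus; in particular the composition identity is not a new theorem but a direct consequence of the definitions \eqref{suTRLD}, \eqref{TRLI}, \eqref{TCDcc}. Apart from this, the proof is essentially a one-line assembly of Lemma \ref{lemma1}, Lemma \ref{lemma2}, and fractional integration, so I would keep the write-up brief.
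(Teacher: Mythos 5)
Your proposal is correct and follows essentially the same route as the paper: combine the energy identity of Lemma \ref{lemma2} with the tempered inequality \eqref{3.1inportantiq}, then apply ${_{0}I}_{t}^{\alpha,\lambda}$ together with the composition identity \eqref{3.6}. The only difference is presentational — you make explicit the multiplication by $2e^{\lambda t}$ before integrating and the positivity of the tempered kernel, steps the paper leaves implicit when passing from \eqref{important2} to \eqref{stability}.
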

%%%%%%%%%%%%%%%%%%%%%%%%%%%%%%%%%%
\begin{proof}
Using the inequality (\ref{3.1inportantiq}), we can obtain
\begin{equation}\label{important2}
e^{-\lambda t}\frac{1}{2}\; _{0}^{C}D_{t}^{\alpha,\lambda} \big\|u_h(\cdot,t)\big\|^2
+\kappa_{\alpha}\|p_h(\cdot,t)\|^2\leq 0.
\end{equation}
Recall the composite properties of the Caputo tempered fractional derivative and the Riemann-Liouville tempered fractional integral, we have
\begin{equation}\label{3.6}
_{0}I_t^{\alpha,\lambda}[_{0}^{C}D_t^{\alpha,\lambda} u(t)]=u(t)-e^{-\lambda t} u(0).
\end{equation}
Applying the operator $_{0}I_t^{\alpha,\lambda}$ on both sides of the
inequality (\ref{important2}) leads to (\ref{stability}).
\end{proof}
%%%%%%%%%%%%%%%%%%%%%%%%%%%%%%%%%%%%%%%%
%%%%%%%%%%%%%%%%%%%%%%%%%%%%%%%%%%%%%%%%
\subsection{Convergence analysis of the semi-discrete LDG scheme}\label{sec:3.2}
Now, we given the $L^2$ error estimate.
In order to give more detailed error estimate, the following two special projections operators introduced in \cite{Cockburn:00} will be used.
\begin{equation}\label{Projection1}
 \left(\mathcal {P^-}w(x)-w(x)),v(x)\right)_{I_j}=0,\forall v\in P^{k-1}(I_j),
 \mathcal {P^-}w(x^{-}_{j+1/2})=w(x_{j+1/2}),
\end{equation}
\begin{equation}\label{Projection2}
 \left(\mathcal {P^+}w(x)-w(x))v(x)\right)_{I_j}=0,\forall v\in P^{k-1}(I_j),
 \mathcal  {P^+}w(x^{+}_{j-1/2})=w(x_{j-1/2}).
\end{equation}
%%%%%%%%%%%%%%%%%%%%%%%%%%%%%%%%%%%%%%%%%%
\begin{lemma}\label{Projectionrate}\cite{Cockburn:98}
For projection operators $\mathcal{P^\pm}$, the following estimate holds
\begin{equation}\label{Prorate}
 \|w^e\| + h\|w^e\|_{\infty} + h^{\frac{1}{2}}\|w^e\|_{\Gamma_h} \leq Ch^{k + 1},
\end{equation}
where $w^e=\mathcal{P^\pm}w-w,$
 $C$ is a positive constant depending $u$ and its derivatives but independent of $h$.
\end{lemma}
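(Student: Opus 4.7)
The plan is to reduce to a single reference interval by affine scaling, obtain the projection error there via a Bramble--Hilbert argument, and scale back to each element $I_j$. First I would verify that $\mathcal{P}^{\pm}$ is well-defined on $P^k(I_j)$. The defining relations \eqref{Projection1}--\eqref{Projection2} impose $k$ orthogonality constraints against $P^{k-1}(I_j)$ plus one nodal evaluation, giving $k+1$ linear conditions on the $(k+1)$-dimensional space $P^k(I_j)$. To see they are independent, suppose $p \in P^k(I_j)$ satisfies the $\mathcal{P}^-$ conditions with zero data: then $p(x_{j+1/2})=0$, so $p(x)=(x-x_{j+1/2})\,q(x)$ with $q\in P^{k-1}(I_j)$, and testing orthogonality against $q$ yields $((x-x_{j+1/2})q,q)_{I_j}=0$; since $x-x_{j+1/2}$ has constant sign on $I_j$, this forces $q\equiv 0$, hence $p\equiv 0$. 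The argument for $\mathcal{P}^+$ is symmetric, so both projections exist and are uniformly bounded.

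Next, I would pull back to $\hat I=[-1,1]$ via the affine map $x=x_j+(h_j/2)\xi$. The projection becomes an $h$-independent bounded operator $\hat{\mathcal{P}}^{\pm}:C(\hat I)\to P^k(\hat I)$ that reproduces $P^k(\hat I)$, so the Bramble--Hilbert lemma produces
\begin{equation*}
  \|\hat w - \hat{\mathcal{P}}^{\pm}\hat w\|_{L^\infty(\hat I)} + \|\hat w - \hat{\mathcal{P}}^{\pm}\hat w\|_{L^2(\hat I)} + \max_{\xi\in\partial\hat I}\bigl|(\hat w - \hat{\mathcal{P}}^{\pm}\hat w)(\xi)\bigr| \le \hat C\,|\hat w|_{W^{k+1,\infty}(\hat I)},
\end{equation*}
with $\hat C$ depending only on $k$.

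Scaling back to $I_j$ via $\|\cdot\|_{L^2(I_j)}=(h_j/2)^{1/2}\|\cdot\|_{L^2(\hat I)}$ and $|\hat w|_{W^{k+1,\infty}(\hat I)}=(h_j/2)^{k+1}|w|_{W^{k+1,\infty}(I_j)}$ produces element-wise bounds $\|w^e\|_{L^\infty(I_j)}\le C h_j^{k+1}$, $\|w^e\|_{L^2(I_j)}\le C h_j^{k+3/2}$ and $|w^e|\le C h_j^{k+1}$ at the nodes $x_{j\pm 1/2}$. Squaring and summing over $j=1,\ldots,N$ then gives $\|w^e\|\le C h^{k+1}$ and $\|w^e\|_{\Gamma_h}\le C h^{k+1/2}$, while the $L^\infty$ bound carries through unchanged under the maximum. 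Inserting these estimates into $\|w^e\|+h\|w^e\|_{\infty}+h^{1/2}\|w^e\|_{\Gamma_h}$ yields \eqref{Prorate}, with $C$ absorbing the $W^{k+1,\infty}(\Omega)$ norm of $w$.

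The main obstacle is the uniform boundedness of $\hat{\mathcal{P}}^{\pm}$ on the reference element: once nonsingularity of its defining $(k+1)\times(k+1)$ linear system is verified and Bramble--Hilbert is invoked on $\hat I$, the remaining steps are routine affine scaling and summation over cells.
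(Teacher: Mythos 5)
Your proposal is correct: the well-posedness argument for the Gauss--Radau projections (endpoint matching plus orthogonality to $P^{k-1}$, with the sign of $x-x_{j\mp1/2}$ forcing uniqueness), the affine pull-back to a reference element, the Bramble--Hilbert estimate, and the scaling/summation over cells are exactly the standard route to \eqref{Prorate}. The paper itself offers no proof of this lemma --- it simply cites Cockburn and Shu --- and your argument reproduces the standard one from that literature, so there is nothing further to reconcile.
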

%%%%%%%%%%%%%%%%%%%%%%%%%%%%%%%%
 \begin{lemma}\cite{Dixo:86}\label{gronwalla}
Let $u(t)$ be continuous and non-negative on $[0,T]$. If
\begin{equation}\label{conditiona}
u(t)\leq \varphi(t)+M\int_{0}^{t}u(s)(t-s)^{\beta}ds,0\leq t\leq T,
\end{equation}
where $0\leq\alpha<1$. $\varphi(t)$ is nonnegative monotonic increasing continuous function on $[0,T]$, and $M$ is a positive constant, then
\begin{equation}\label{conlo}
u(t)\leq \varphi(t) E_{1-\alpha}(M\Gamma(1-\alpha)t^{1-\alpha}),~
0\leq t<T,
\end{equation}
where $E_{1-\alpha}(z)$ denotes the Mittag-Leffler function defined for all $0\leq\alpha<1$ by \cite{Podlubny:99}
\begin{equation}\label{defM-L}
E_{1-\alpha}(z)=\sum_{k=0}^{\infty}\frac{z^k}{\Gamma((1-\alpha) k+1)}.
\end{equation}
\end{lemma}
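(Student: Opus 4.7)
The approach is the standard Picard-type iteration of the singular integral inequality, identifying the resulting series with the Mittag-Leffler function. (I read the kernel exponent $\beta$ as $-\alpha$, which matches both the hypothesis $0\le\alpha<1$ and the Mittag-Leffler conclusion.) Define the linear operator
\begin{equation*}
(Bf)(t) := M\int_{0}^{t}(t-s)^{-\alpha}f(s)\,ds = M\,\Gamma(1-\alpha)\,{_{0}I}_{t}^{1-\alpha}f(t),
\end{equation*}
so that the hypothesis reads $u\le\varphi+Bu$ on $[0,T]$. The first step is to iterate this inequality $n$ times: since $B$ is positivity-preserving (kernel and $M$ are nonnegative), substitution gives
\begin{equation*}
u(t)\le\sum_{k=0}^{n-1}B^{k}\varphi(t)+B^{n}u(t),\qquad n=1,2,\dots
\end{equation*}

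The second step is to compute $B^{k}$ explicitly by the semigroup property of the Riemann-Liouville integral. Since ${_{0}I}_{t}^{1-\alpha}\circ{_{0}I}_{t}^{1-\alpha}={_{0}I}_{t}^{2(1-\alpha)}$, one obtains inductively
\begin{equation*}
B^{k}f(t)=\bigl(M\,\Gamma(1-\alpha)\bigr)^{k}\,{_{0}I}_{t}^{k(1-\alpha)}f(t).
\end{equation*}
Using the monotone increase of $\varphi$ to bound ${_{0}I}_{t}^{k(1-\alpha)}\varphi(t)\le\varphi(t)\cdot{_{0}I}_{t}^{k(1-\alpha)}1=\varphi(t)\,t^{k(1-\alpha)}/\Gamma(k(1-\alpha)+1)$, this yields the clean estimate
\begin{equation*}
B^{k}\varphi(t)\le\varphi(t)\,\frac{\bigl(M\,\Gamma(1-\alpha)\,t^{1-\alpha}\bigr)^{k}}{\Gamma(k(1-\alpha)+1)}.
\end{equation*}

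The third step is to dispose of the tail term $B^{n}u(t)$. Because $u$ is continuous on $[0,T]$, it is bounded by some $U$, and the same Beta-function computation gives
\begin{equation*}
B^{n}u(t)\le U\,\frac{\bigl(M\,\Gamma(1-\alpha)\,t^{1-\alpha}\bigr)^{n}}{\Gamma(n(1-\alpha)+1)},
\end{equation*}
which tends to $0$ uniformly on $[0,T]$ as $n\to\infty$, since $\Gamma(n(1-\alpha)+1)$ grows factorially in $n$. Passing to the limit and summing the majorising series produces
\begin{equation*}
u(t)\le\varphi(t)\sum_{k=0}^{\infty}\frac{\bigl(M\,\Gamma(1-\alpha)\,t^{1-\alpha}\bigr)^{k}}{\Gamma(k(1-\alpha)+1)}=\varphi(t)\,E_{1-\alpha}\!\bigl(M\,\Gamma(1-\alpha)\,t^{1-\alpha}\bigr),
\end{equation*}
which is the desired inequality.

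The main obstacle, and the place one must be careful, is the semigroup identification $B^{k}=(M\Gamma(1-\alpha))^{k}{_{0}I}_{t}^{k(1-\alpha)}$; this requires a genuine Fubini/Beta-function calculation rather than merely the formal composition, because the kernels are weakly singular. Everything else---monotone bounding of $\varphi$, factorial decay of the remainder, recognition of the Mittag-Leffler series---is a direct consequence once that identification is in hand.
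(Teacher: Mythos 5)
Your proof is correct, but note that the paper itself does not prove this lemma at all: it is quoted verbatim (typo included) from the reference of Dixon and McKee, so there is no in-paper argument to compare against. What you have written is essentially the classical proof of the weakly singular Gronwall inequality and, as far as I can tell, it is the same line of reasoning used in that source: iterate the inequality with the positivity-preserving operator $B$, identify $B^{k}$ with $\bigl(M\Gamma(1-\alpha)\bigr)^{k}\,{_{0}I}_{t}^{k(1-\alpha)}$ via the semigroup property of the Riemann--Liouville integral (a Fubini/Beta-function computation, which you rightly flag as the only delicate step), use the monotonicity of $\varphi$ to pull it out of the iterated integrals, kill the remainder $B^{n}u$ by the boundedness of $u$ and the factorial growth of $\Gamma\bigl(n(1-\alpha)+1\bigr)$, and sum the series into $E_{1-\alpha}\bigl(M\Gamma(1-\alpha)t^{1-\alpha}\bigr)$. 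You also correctly repaired the statement's inconsistency by reading the kernel exponent $\beta$ as $-\alpha$, which is the only reading compatible with the hypothesis $0\le\alpha<1$ and with the stated Mittag--Leffler bound; with that reading your argument is complete and supplies a proof the paper merely cites.
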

%%%%%%%%%%%%%%%%%%%%%%%%%%%%
%%%%%%%%%%%%%%%%%%%%%%%%%%%%%%%%%%%
\begin{lemma} Let $u_h,p_h$ to be the
solution of semi-discrete LDG scheme (\ref{semi-discreteLDG}), and $u,p$ be the
exact solution of  (\ref{1.1}) with initial condition (\ref{intinalvalue}) and the periodic boundary
(\ref{Bvalue}), the following error estimate holds
\begin{equation}\label{3.9}
 \|u(\cdot,t)-u_h(\cdot,t)\|\leq Ch^{k+1}.
\end{equation}
\end{lemma}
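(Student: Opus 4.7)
The plan is to follow the standard LDG convergence blueprint of Cockburn--Shu, adapted to the Caputo tempered fractional time derivative. The key point is that the exact solution $(u,p)$ of \eqref{2.3} also satisfies the variational formulation \eqref{semi-discreteLDG} (Galerkin consistency), so subtracting the two gives the error equation
\begin{equation*}
B(u-u_h,p-p_h;v_h,w_h)=0,\qquad \forall v_h,w_h\in V_h,
\end{equation*}
with $B$ as in \eqref{3.3}. First I would split the errors via the projections $\mathcal{P}^\pm$ from \eqref{Projection1}--\eqref{Projection2}:
\begin{equation*}
u-u_h=\eta_u-\xi_u,\qquad p-p_h=\eta_p-\xi_p,
\end{equation*}
where $\eta_u=\mathcal{P}^-u-u$, $\eta_p=\mathcal{P}^+p-p$ are the projection parts (controlled by Lemma \ref{Projectionrate}) and $\xi_u=\mathcal{P}^-u-u_h$, $\xi_p=\mathcal{P}^+p-p_h\in V_h$ are the discrete parts that we need to estimate.

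Next I would substitute this decomposition into the error equation and pick the test functions $v_h=\xi_u$, $w_h=\kappa_\alpha\xi_p$, exactly as in Lemma \ref{lemma2}. The alternating flux choice $\widehat{u}_h=u_h^-$, $\widehat{p}_h=p_h^+$ together with the defining properties of $\mathcal{P}^\pm$ (interpolation at the correct endpoint and orthogonality to $P^{k-1}$) is designed to make the $\eta$-contributions coming from the boundary and $(\cdot)_x$ terms vanish, so that only the volume terms $\big(\eta_u,{_{0}^{C}\!D}_t^{\alpha,\lambda}\xi_u\big)_\Omega$ and $(\eta_p,\xi_p)_\Omega$ survive. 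Mirroring the computation of Lemma \ref{lemma2} on the homogeneous side yields
\begin{equation*}
\big(\xi_u,{_{0}^{C}\!D}_t^{\alpha,\lambda}\xi_u\big)_\Omega+\kappa_\alpha\|\xi_p\|^2
=\big({_{0}^{C}\!D}_t^{\alpha,\lambda}\eta_u,\xi_u\big)_\Omega+\kappa_\alpha(\eta_p,\xi_p)_\Omega.
\end{equation*}
I would then use Lemma \ref{lemma1} to pass from $\big(\xi_u,{_{0}^{C}\!D}_t^{\alpha,\lambda}\xi_u\big)_\Omega$ to $\tfrac{1}{2}e^{-\lambda t}{_{0}^{C}\!D}_t^{\alpha,\lambda}\|\xi_u\|^2$, apply Cauchy--Schwarz plus Young's inequality to absorb the $\|\xi_p\|^2$ and an $\varepsilon\|\xi_u\|^2$ term, and bound the $\eta$'s by $Ch^{k+1}$ via Lemma \ref{Projectionrate}.

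Finally I would apply the tempered fractional integral ${_{0}I}_t^{\alpha,\lambda}$ to both sides, use the composition identity \eqref{3.6} and the fact that $\xi_u(\cdot,0)$ can be made $O(h^{k+1})$ by choosing $u_h(\cdot,0)=\mathcal{P}^-u_0$, which converts the inequality into a Volterra-type bound
\begin{equation*}
\|\xi_u(\cdot,t)\|^2\leq Ch^{2(k+1)}+M\int_0^t(t-s)^{\alpha-1}\|\xi_u(\cdot,s)\|^2\,ds.
\end{equation*}
Lemma \ref{gronwalla} (the fractional Gronwall inequality with $\beta=\alpha-1$) then gives $\|\xi_u(\cdot,t)\|\leq Ch^{k+1}$, and the triangle inequality $\|u-u_h\|\leq\|\eta_u\|+\|\xi_u\|$ together with Lemma \ref{Projectionrate} completes the proof. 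The main obstacle I anticipate is the careful handling of the term $\big({_{0}^{C}\!D}_t^{\alpha,\lambda}\eta_u,\xi_u\big)_\Omega$: because $\mathcal{P}^-$ and ${_{0}^{C}\!D}_t^{\alpha,\lambda}$ do not commute in an obvious way, one must either exploit smoothness of $u$ in time to bound $\|{_{0}^{C}\!D}_t^{\alpha,\lambda}\eta_u\|$ directly by $Ch^{k+1}$, or rewrite this term through an integration-by-parts in time (using the definition of the tempered Caputo derivative) before applying the projection estimate. Getting this term into a form amenable to the fractional Gronwall inequality without losing an $h$ power is the delicate step.
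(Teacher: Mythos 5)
Your proposal follows essentially the same route as the paper's proof: Galerkin orthogonality, the $\mathcal{P}^{\pm}$ projection splitting, test functions $v_h=\xi_u$, $w_h=\kappa_\alpha\xi_p$ so the alternating fluxes and projection properties annihilate the boundary and derivative terms, the tempered coercivity inequality of Lemma \ref{lemma1}, Cauchy--Schwarz with the projection estimate of Lemma \ref{Projectionrate}, then ${_{0}I}_t^{\alpha,\lambda}$ with the composition identity and the fractional Gronwall Lemma \ref{gronwalla}, and finally the triangle inequality. The term $\big({_{0}^{C}\!D}_t^{\alpha,\lambda}\eta_u,\xi_u\big)_\Omega$ that you flag as delicate is treated in the paper exactly by your first suggestion: since $\mathcal{P}^-$ acts only in space it commutes with the time derivative, so $\|{_{0}^{C}\!D}_t^{\alpha,\lambda}\eta_u\|\leq Ch^{k+1}$ follows from the projection estimate applied to ${_{0}^{C}\!D}_t^{\alpha,\lambda}u$ under the assumed smoothness.
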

%%%%%%%%%%%%%%%%%%%%%%%%%%%%%%%%%%%
\begin{proof} With the denote in (\ref{3.3}), we directly get
\begin{equation}\label{3.10}
 B(u_h,p_h;v_h,w_h)=0,~~~~\forall v_h,w_h\in V_h,
\end{equation}and
\begin{equation}\label{3.11}
 B(u,p;v_h,w_h)=0,~~~~\forall v_h,w_h\in V_h,
\end{equation}
Subtracting  (\ref{3.11}) from (\ref{3.10}), then we obtain the error equation
\begin{equation}\label{3.12}
 B(e,\overline{e};v_h,w_h)=0,~~~~\forall v_h,w_h\in V_h,
\end{equation}
where we denote $e=u-u_h,\overline{e}=p-p_h$.
We divide the error both $e$ and $\overline{e}$ into two parts
\begin{equation}\label{3.13}
\begin{array}{l}
e=u-u_{h}=(u-\mathcal{P^-}u)+(\mathcal{P^-}u-u_{h})=\varepsilon_{h}+e_{h},\\
\overline{e}=p-p_{h}=(p-\mathcal{P^+}p)+(\mathcal{P^+}p-p_{h})=\overline{\varepsilon}_{h}+\overline{e}_{h}.
\end{array}
\end{equation}
If we take $v_h=e_h,w_h=\kappa_\alpha\overline{e}_h$ in (\ref{3.12}), we get
\begin{equation}\label{3.14}
 B(e_h,\overline{e}_h;e_h,\kappa_\alpha\overline{e}_h)=-B(\varepsilon_{h},\overline{\varepsilon}_h;
 e_h,\kappa_\alpha\overline{e}_h).
\end{equation}
For the left side of (\ref{3.14}), using the equation (\ref{3.2}) in Lemma \ref{lemma2}, we have
\begin{equation}\label{3.15}
B(e_h,\overline{e}_h;e_h,\kappa_\alpha\overline{e}_h)=
\big(e_h,\,{_{0}^{C}\!D}_{t}^{\alpha,\lambda}e_h\big)_{\Omega}
 +\kappa_{\alpha}\big(\overline{e},\overline{e}\big)_{\Omega}=0.
\end{equation}
Obviously, the right of (\ref{3.14}) can be written as
\begin{eqnarray}\label{3.16}
-B(\varepsilon_{h},\overline{\varepsilon}_h;e_h,\kappa_\alpha\overline{e}_h)
:&=&-\big(e_h{_{0}^{C}\!D}_{t}^{\alpha,\lambda},\varepsilon_{h}\big)_{\Omega}
 - \kappa_{\alpha}\big(\big(\overline{\varepsilon}_h,(e_h)_x\big)_{\Omega}   \nonumber\\
&-&\sum_{j=1}^{N}\big[(\overline{\varepsilon}_h)^+_{j+\frac{1}{2}}e^{-}_{j+\frac{1}{2}}
-(\overline{\varepsilon}_h)^+_{j-\frac{1}{2}}(v_h)^{+}_{j-\frac{1}{2}}\big] \nonumber\\
&+& \big(\varepsilon_{h}, (\overline{e}_h)_x\big)_{\Omega}+ \big(\overline{\varepsilon}_h, \overline{e}_h\big)_{\Omega}   \\
&-&\sum_{j=1}^{N}\big[(\varepsilon_{h})^-_{j+\frac{1}{2}} (\overline{e}_h)^{-}_{j+\frac{1}{2}}
-(\varepsilon_{h})^-_{j-\frac{1}{2}}(\overline{e}_h)^{+}_{j-\frac{1}{2}}\big].\nonumber
\end{eqnarray}
Since $(e_h)_x$ and $(\overline{e}_h)_x$ are polynomials of degree at most $k-1$, applying the properties (\ref{Projection1})
and (\ref{Projection2}) of the projections $\mathcal {P^\pm}$, we obtain
\begin{equation*}
\big(\overline{\varepsilon}_h,(e_h)_x\big)_{I_j}=0 \quad \textrm{and} \quad \big(\varepsilon_{h}, (\overline{e}_h)_x\big)_{I_j}=0.
\end{equation*}
In other way,
\begin{equation*}
({\varepsilon}_h)^{-}_{j+\frac{1}{2}}=u_{j+\frac{1}{2}}-(\mathcal {P}u)^{-}_{j+\frac{1}{2}}=0 \quad \textrm{and} \quad ({\overline{\varepsilon}}_h)^{+}_{j-\frac{1}{2}}=p_{j-\frac{1}{2}}-(\mathcal {P}p)^{+}_{j-\frac{1}{2}}
=0.
\end{equation*}
By the Cauchy's inequality, we get
\begin{eqnarray}\label{3.17}
\big(e_h,\,{_{0}^{C}\!D}_{t}^{\alpha,\lambda}e_h\big)_{\Omega}
 +\kappa_{\alpha}\big(\overline{e},\overline{e}\big)_{\Omega}
&\leq&\frac{1}{2}
\big({_{0}^{C}\!D}_{t}^{\alpha,\lambda}\varepsilon_{h}
,{_{0}^{C}\!D}_{t}^{\alpha,\lambda}\varepsilon_{h}\big)_{\Omega}
+\frac{1}{2}\big(e_h,e_h\big)_{\Omega}  \nonumber \\
&+&\frac{1}{2}\kappa_{\alpha}\big(\overline{\varepsilon}_h,\overline{\varepsilon}_h\big)_{\Omega}
+\frac{1}{2}\kappa_{\alpha}\big(\overline{e}_h,\overline{e}_h\big)_{\Omega}.
\end{eqnarray}
From the Lemma \ref{Projectionrate}, we conclude that
\begin{equation}\label{3.18}
2\big(e_h\,{_{0}^{C}\!D}_{t}^{\alpha,\lambda},e_h\big)_{\Omega}
 +\kappa_{\alpha}\big(\overline{e},\overline{e}\big)_{\Omega}
\leq \big(e_h,e_h\big)_{\Omega}+Ch^{2k+2}.
\end{equation}
Using the inequality (\ref{3.1inportantiq}), we have
\begin{eqnarray*}\label{3.19}
 {_{0}^{C}\!D}_{t}^{\alpha,\lambda}\|e_h(\cdot,t)\|^2
+\kappa_{\alpha}e^{\lambda t}\|\overline{e}(\cdot,t)\|^2
\leq e^{\lambda t}\|e_h(\cdot,t)\|^2+e^{\lambda t}Ch^{2k+2}.
\end{eqnarray*}
Combining the composite properties (\ref{3.6}) and the definition of
Riemann-Liouville tempered fractional integral, we arrive at
\begin{eqnarray}\label{3.20}
e^{2\lambda t}\|e_h(\cdot,t)\|^2&\leq&\frac{e^{\lambda t}}
{\Gamma(\alpha)}\int_0^t(t-\tau)^{\alpha-1}
\big(e^{\lambda \tau}\|e_h(\cdot,\tau)\|\big)^2d\tau \nonumber\\
&&+Ch^{2k+2}\frac{e^{\lambda t}}
{\Gamma(\alpha)}\int_0^t(t-\tau)^{\alpha-1}e^{2\lambda \tau}d\tau, \\
&\leq&\frac{e^{\lambda t}}
{\Gamma(\alpha)}\int_0^t(t-\tau)^{\alpha-1}
\big(e^{\lambda \tau}\|e_h(\cdot,\tau)\|\big)^2d\tau
+ C e^{3\lambda t} h^{2k+2},  \nonumber
\end{eqnarray}
where we used the fact
\begin{eqnarray}\label{argu3.20}
\int_0^t(t-\tau)^{\alpha-1}e^{2\lambda \tau}d\tau=
e^{2\lambda t}\int_0^ts^{\alpha-1}e^{-2\lambda s}ds
\leq
 \frac{e^{2\lambda t}}{(2\lambda)^\alpha}\Gamma(\alpha).
%&=&
% \frac{e^{2\lambda t}}{(2\lambda)^\alpha}\int_0^{+\infty}y^{\alpha-1}e^{-y}dy
\end{eqnarray}
Furthermore, using the fractional Gronwall's lemma \ref{gronwalla}, we have
\begin{equation}\label{3.21}
\|e_h(\cdot,t)\|^2 \leq Ce^{\lambda T}E_{\alpha}(e^{\lambda t}t^{\alpha})h^{2k+2},
\end{equation}
where $E_{\alpha}(\cdot)$ denotes the Mittag-Leffler function is defined by (\ref{defM-L}).
\end{proof}
%%%%%%%%%%%%%%%%%%%%%%%%%%%%%%%
%%%%%%%%%%%%%%%%%%%%%%%%%%%%%%%
\section{Fully discrete LDG schemes}\label{sec:4}
In this section we discrete the time in the semi-discrete scheme by virtue of high
order  approximation. Let $0= t_0 < t_1 <\cdots <t_n < t_{n+1} <\cdots< t_M = T$ be the subdivision of the time
interval $[0,T ]$, with the time step $\tau= t_{n+1}-t_n$.
To achieve the high order accuracy, we employ the $q$-th order approximations given in \cite{Chen:14} to approximate the Riemann-Liouville tempered derivative, i.e.
\begin{equation}\label{RLTCH}
{_0D}_t^{\alpha,\lambda} v(t)|_{t_n}=\tau^{-\alpha}\sum_{k=0}^{n}{d}_{k}^{q,\alpha}v(t_{n-k})+R^n, ~~q=1,2,3,4,5,
\end{equation}
where  $R^n=\mathcal{O}(\tau^q)$ and
$${d}_{k}^{q,\alpha}=e^{-\lambda k \tau}{l}_k^{q,\alpha},~~q=1,2,3,4,5.$$
More details  of ${l}_k^{q,\alpha}$, one can refer to \cite{Chen:14}. Using (\ref{RLTCH}), we find
\begin{equation}\label{4.3}
\begin{split}
&{_0D}_t^{\alpha,\lambda} u(x,t)|_{(x_i,t_n)}=\tau^{-\alpha}\sum_{k=0}^{n}{d}_{k}^{q,\alpha}u(x_i,t_{n-k})+\mathcal{O}(\tau^q), \\
&{_0D}_t^{\alpha,\lambda} [e^{-\lambda t}u(x,0)]_{(x_i,t_n)}=\tau^{-\alpha}\sum_{k=0}^{n}{d}_{k}^{q,\alpha}e^{-\lambda (n-k)\tau}u(x_i,0)+\mathcal{O}(\tau^q).
\end{split}
\end{equation}
Recalling the relation of Riemann-Liouville and Caputo tempered fractional derivatives \cite{Sabzikar:15,Li:14}
\begin{equation}\label{4.1}
{^C_0}\!{D}_t^{\alpha,\lambda}v( t)
={_0D}_t^{\alpha,\lambda} [v( t)-e^{-\lambda t}v( 0)],
\end{equation}
the weak form of the first order system (\ref{2.3}) at $t_n$ can be rewritten as
\begin{equation}\label{weakform}
\begin{array}{l}
\displaystyle
\tau^{-\alpha}\sum_{k=0}^{n}{d_{k}^\alpha}\big(u(x,t_{n-k}), v\big)_{\Omega}
-\tau^{-\alpha}\sum_{k=0}^{n}{d_{k}^{\alpha}}e^{-\lambda (n-k)\tau}\big(u(x,t_0), v\big)_{\Omega} +\kappa_{\alpha}\big(p(x,t_{n}),v_x\big)_{\Omega} \\
\displaystyle
\quad\quad\quad\quad\quad\quad\quad\quad-\kappa_{\alpha}\sum_{j=1}^N\big[(p(x,t_{n})v^{-})_{j+\frac{1}{2}}- (p(x,t_{n})v^{+})_{j-\frac{1}{2}}\big]
=\big(T^n,v\big)_{\Omega} \\
\displaystyle
\big(p(x,t_{n}), w\big)_{\Omega}+\big(u(x,t_{n}), w_x\big)_{\Omega}-\sum_{j=1}^N\big[(u(x,t_{n})
w^{-})_{j+\frac{1}{2}}
-(u(x,t_{n}) w^{+})_{j-\frac{1}{2}}\big]=0.
\end{array}
\end{equation}
Let $u_h^n,p_h^n \in V_h$ be the approximate solution of $u(x,t_n),p(x,t_n)$, respectively. We propose the fully discrete LDG schemes as follows: Find $u_h^n,p_h^n\in V_h$,
\begin{equation}\label{fullyLDG}
\begin{array}{l}
\displaystyle
\big(p_h^n, w\big)_{\Omega} + \big(u_h^n, w_x\big)_{\Omega} - \sum_{j=1}^{N}\big[(\widehat{u}^n_h
w^{-})_{j+\frac{1}{2}} - (\widehat{u}_h^n w^{+})_{j-\frac{1}{2}}\big]=0,  \\
\displaystyle
 {l}_{0}^{q,\alpha}\big( u_h^n, v\big)_{\Omega} + \kappa_{\alpha}\tau^{\alpha}\bigg(\big(p_h^n, v_x\big)_{\Omega}
-\sum_{j=1}^{N}\big[(\widehat{p}^n_h
v^{-})_{j+\frac{1}{2}} - (\widehat{p}_h^n v^{+})_{j-\frac{1}{2}}\big]\bigg)      \\
\displaystyle
=e^{-\lambda n \tau}\sum_{k=0}^{n-1}{l}_k^{q,\alpha}\big(u_h^0, v\big)_{\Omega}
-\sum_{k=1}^{n-1}e^{-\lambda k \tau}{l}_k^{q,\alpha}\big(u_h^{n-k}, v\big)_{\Omega},
\end{array}
\end{equation}
for all $v,w\in V_h$, $j=1,2,...,N$.
We take numerical flux to be $\widehat{u}_h^n=(u_h^n)^{-},\widehat{q}_h^n=(q_h^n)^{+}$ with the same choice of (\ref{flux2.5}).
In the following, we prove the stability and error estimate of the schemes (\ref{fullyLDG}) with $q=1$ in $L_2$ norm.
For convenience, we denote ${l}_k^{1,\alpha}$ by $w_k$,
where the coefficients
\begin{equation}\label{4.5}
w_k=(-1)^k\left ( \begin{matrix} \alpha \\ k\end{matrix} \right ),
w_0=1, ~~w_k=\left(1-\frac{\alpha+1}{k}\right)w_{k-1},~~k \geq {1}.
\end{equation}
%%%%%%%%%%%%%%%%%%%%%%%%%%%%%
\begin{lemma}\cite{Liu:15}\label{lemma2.1}
The coefficients $w_k$ defined in (\ref{4.5}) satisfy
\begin{equation}\label{4.6}
  w_0=1; ~w_k<0,~k\geq 1; ~~\sum_{k=0}^{n-1}w_k>0;  ~\sum_{k=0}^{\infty}w_k=0;
\end{equation}
and
\begin{equation}\label{4.7}
 \frac{1}{n^\alpha \Gamma(1-\alpha)}< \sum_{k=0}^{n-1}w_k=-\sum_{k=n}^{\infty}w_k \leq \frac{1}{n^\alpha}, ~{\rm for}~n\geq 1.
\end{equation}
\end{lemma}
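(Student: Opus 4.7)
The plan is to work from the generating function identity $(1-z)^\alpha = \sum_{k=0}^\infty w_k z^k$, translate everything into properties of the recurrence $w_k = (1-(\alpha+1)/k)w_{k-1}$, and then use a single clean telescoping identity to reduce the tail sum $\sum_{k=n}^\infty w_k$ to a single ratio of Gamma functions.

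First, I would verify the sign statements by walking the recurrence. The equation $w_0 = 1$ is part of the definition. For $k=1$ the factor is $1-(\alpha+1) = -\alpha \in(-1,0)$, so $w_1 = -\alpha < 0$. For every $k \ge 2$, since $\alpha \in (0,1)$ we have $1 - (\alpha+1)/k = (k-1-\alpha)/k > 0$, so $w_k$ and $w_{k-1}$ carry the same sign; induction gives $w_k < 0$ for all $k\ge 1$.

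Next, iterating the recurrence produces the closed form
\begin{equation*}
w_k = \prod_{j=1}^{k}\frac{j-\alpha-1}{j} = -\frac{\alpha\,\Gamma(k-\alpha)}{\Gamma(1-\alpha)\,\Gamma(k+1)}, \qquad k\ge 1,
\end{equation*}
using $(1-\alpha)(2-\alpha)\cdots(k-1-\alpha) = \Gamma(k-\alpha)/\Gamma(1-\alpha)$. A direct check shows the telescoping identity
\begin{equation*}
\alpha\,\frac{\Gamma(k-\alpha)}{\Gamma(k+1)} = \frac{\Gamma(k-\alpha)}{\Gamma(k)} - \frac{\Gamma(k+1-\alpha)}{\Gamma(k+1)},
\end{equation*}
so the tail collapses:
\begin{equation*}
-\sum_{k=n}^{\infty} w_k = \frac{1}{\Gamma(1-\alpha)} \sum_{k=n}^{\infty}\!\left[\frac{\Gamma(k-\alpha)}{\Gamma(k)} - \frac{\Gamma(k+1-\alpha)}{\Gamma(k+1)}\right] = \frac{1}{\Gamma(1-\alpha)}\cdot\frac{\Gamma(n-\alpha)}{\Gamma(n)}.
\end{equation*}
Taking $n\to\infty$ uses $\Gamma(n-\alpha)/\Gamma(n) = O(n^{-\alpha}) \to 0$, which yields $\sum_{k=0}^\infty w_k = 0$; the identity $\sum_{k=0}^{n-1}w_k = -\sum_{k=n}^\infty w_k$ then gives both the positivity of the partial sum (since $w_k<0$ for $k\ge n$) and a convenient formula to bound.

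The sharp bounds are the main obstacle, and the strategy is to write
\begin{equation*}
\sum_{k=0}^{n-1} w_k = \frac{\Gamma(n-\alpha)}{\Gamma(n)\,\Gamma(1-\alpha)} = \prod_{j=1}^{n-1}\!\left(1-\frac{\alpha}{j}\right).
\end{equation*}
For the upper bound, take logarithms, apply $\ln(1-\alpha/j) \le -\alpha/j$, and use $\sum_{j=1}^{n-1}1/j \ge \int_1^n dt/t = \ln n$ to conclude that the product is at most $e^{-\alpha\ln n} = n^{-\alpha}$; the case $n=1$ is an equality consistent with the claimed "$\le$". For the strict lower bound, I would invoke Gautschi's inequality $(n-1)^\alpha < \Gamma(n)/\Gamma(n-\alpha) < n^\alpha$ for $n\ge 1$ and $0<\alpha<1$, which immediately delivers $\Gamma(n-\alpha)/\Gamma(n) > n^{-\alpha}$, i.e. $\sum_{k=0}^{n-1} w_k > 1/(n^\alpha\Gamma(1-\alpha))$. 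Assembling these three pieces—signs from the recurrence, tail identity from the telescoping, and the pair of product/Gamma inequalities—completes the proof of both (\ref{4.6}) and (\ref{4.7}).
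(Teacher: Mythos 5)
Your proof is essentially correct, but note there is nothing in the paper to compare it against: the lemma is stated with a citation to \cite{Liu:15} and no proof is given, so your self-contained argument is a genuine addition rather than a variant of the authors' route. Your chain — the closed form $w_k=-\alpha\,\Gamma(k-\alpha)/\bigl(\Gamma(1-\alpha)\Gamma(k+1)\bigr)$ from the recurrence \eqref{4.5}, the telescoping identity $\alpha\,\Gamma(k-\alpha)/\Gamma(k+1)=\Gamma(k-\alpha)/\Gamma(k)-\Gamma(k+1-\alpha)/\Gamma(k+1)$, and Gamma-ratio bounds — in fact yields the sharper exact formula $\sum_{k=0}^{n-1}w_k=\Gamma(n-\alpha)/\bigl(\Gamma(n)\Gamma(1-\alpha)\bigr)=\prod_{j=1}^{n-1}(1-\alpha/j)$, from which all of \eqref{4.6} and \eqref{4.7} follow. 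Two small repairs are needed in the write-up. First, the sentence ``taking $n\to\infty$ yields $\sum_{k=0}^{\infty}w_k=0$'' does not follow as stated (tails of any convergent series vanish); the clean fix is to telescope the \emph{finite} sum, $\sum_{k=1}^{n-1}(-w_k)=1-\Gamma(n-\alpha)/\bigl(\Gamma(n)\Gamma(1-\alpha)\bigr)$, which gives the partial-sum formula directly, and then let $n\to\infty$ (using $\Gamma(n-\alpha)/\Gamma(n)=\mathcal{O}(n^{-\alpha})$) to get $\sum_{k=0}^{\infty}w_k=0$ and hence $\sum_{k=0}^{n-1}w_k=-\sum_{k=n}^{\infty}w_k$; equivalently, evaluate your tail identity at $n=1$. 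Second, Gautschi's inequality in its standard form is stated for $x=n-1>0$, i.e.\ $n\geq 2$, so the strict lower bound at $n=1$ should be checked separately; it reduces to the elementary fact $\Gamma(1-\alpha)>1$ for $0<\alpha<1$, which follows, e.g., from log-convexity of $\Gamma$. With these two touch-ups the argument is complete and rigorous.
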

%%%%%%%%%%%%%%%%%%%%%%%%%%%%%%%%
\begin{theorem}The fully discrete LDG scheme (\ref{fullyLDG}) of   initial-boundary
problem (\ref{1.1})-(\ref{Bvalue}) is
unconditional stability and holds
\begin{equation}\label{fullystab}
\|u_h^n\|\leq \|u_h^0\|,~n\geq1.
\end{equation}
\end{theorem}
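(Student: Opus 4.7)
The plan is to mimic the semi-discrete energy argument of Lemma \ref{lemma2} at the discrete level and then close by induction on $n$ using the sign and cancellation properties of the weights $w_k$. Concretely, the first step is to choose the test functions $v=u_h^n$ in the second equation of (\ref{fullyLDG}) and $w=\kappa_\alpha\tau^\alpha p_h^n$ in the first equation, and add the two resulting identities. With the alternating numerical fluxes $\widehat{u}_h^n=(u_h^n)^{-}$, $\widehat{p}_h^n=(p_h^n)^{+}$, and the periodic boundary conditions, the mixed volume terms $(u_h^n,(p_h^n)_x)+(p_h^n,(u_h^n)_x)$ together with their associated flux contributions cancel exactly, reproducing the calculation already carried out in (\ref{3.4}). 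Since $l_0^{1,\alpha}=w_0=1$ for $q=1$, what is left is the discrete energy identity
\begin{equation*}
\|u_h^n\|^2+\kappa_\alpha\tau^\alpha\|p_h^n\|^2
=e^{-\lambda n\tau}\sum_{k=0}^{n-1}w_k\,(u_h^0,u_h^n)_\Omega
-\sum_{k=1}^{n-1}e^{-\lambda k\tau}w_k\,(u_h^{n-k},u_h^n)_\Omega.
\end{equation*}

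The next step is to drop the nonnegative term $\kappa_\alpha\tau^\alpha\|p_h^n\|^2$, apply the Cauchy--Schwarz inequality to each inner product on the right, and divide through by $\|u_h^n\|$. Exploiting the sign pattern from Lemma \ref{lemma2.1}---namely $w_0=1$, $w_k<0$ for $k\ge 1$, and $\sum_{k=0}^{n-1}w_k>0$---the inequality can be rewritten with only nonnegative coefficients $-w_k$ $(k\ge 1)$:
\begin{equation*}
\|u_h^n\|\le e^{-\lambda n\tau}\sum_{k=0}^{n-1}w_k\,\|u_h^0\|
+\sum_{k=1}^{n-1}e^{-\lambda k\tau}(-w_k)\|u_h^{n-k}\|.
\end{equation*}

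I would then proceed by induction on $n$. The base case is trivial. Assuming $\|u_h^j\|\le\|u_h^0\|$ for every $j<n$, substitution into the previous bound gives $\|u_h^n\|\le C_n\|u_h^0\|$ with
\begin{equation*}
C_n=e^{-\lambda n\tau}\sum_{k=0}^{n-1}w_k+\sum_{k=1}^{n-1}e^{-\lambda k\tau}(-w_k).
\end{equation*}
Since $0<e^{-\lambda k\tau}\le 1$ and both sums consist of nonnegative summands, majorizing every exponential by $1$ yields $C_n\le\sum_{k=0}^{n-1}w_k+\sum_{k=1}^{n-1}(-w_k)=w_0=1$, closing the induction.

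The step I expect to be the most delicate is this final combinatorial estimate $C_n\le 1$. It rests on the exact cancellation $\sum_{k=1}^{n-1}w_k-\sum_{k=1}^{n-1}w_k=0$ that appears once every $e^{-\lambda k\tau}$ factor is replaced by $1$; the sign information $w_0=1$ and $w_k<0$ for $k\ge 1$ from Lemma \ref{lemma2.1} is exactly what licenses that replacement simultaneously in both sums. Any weaker handling---for instance, keeping some exponentials and dropping others---would yield only a Gr\"onwall-type inequality with a constant depending on $T$, rather than the sharp bound $\|u_h^n\|\le\|u_h^0\|$ asserted in the theorem.
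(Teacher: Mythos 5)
Your proposal is correct and follows essentially the same route as the paper: the same test-function choice $v=u_h^n$, $w=\kappa_\alpha\tau^\alpha p_h^n$ with cancellation of flux terms by the alternating fluxes and periodicity, Cauchy--Schwarz, the sign properties of $w_k$ from Lemma \ref{lemma2.1}, and induction on $n$. The only cosmetic difference is that you keep the factors $e^{-\lambda k\tau}$ until after invoking the induction hypothesis and then bound them by $1$, whereas the paper discards them before the induction step; the cancellation $\sum_{k=0}^{n-1}w_k-\sum_{k=1}^{n-1}w_k=w_0=1$ that closes your argument is identical to the paper's final estimate.
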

\begin{proof}
Setting $v = u_h^n,w=\kappa_{\alpha}\tau^\alpha p_h^n$ in (\ref{fullyLDG}) and summing over all elements, we obtain
\begin{equation}\label{4.10}
\begin{array}{l}
\displaystyle
\|u_h^n\|^2+\kappa_{\alpha}\tau^{\alpha}\|p^n_{h}\|^2+\kappa_{\alpha}\tau^\alpha\sum_{j=1}^{N}
\big[F_{j+1/2}(u_h^n,p_h^n)-F_{j-1/2}(u_h^n,p_h^n)+\Theta_{j-1/2}(u_h^n,p_h^n)\big]   \\
\displaystyle
=e^{-\lambda n \tau}\sum_{k=0}^{n-1}w_k\big(u_h^0, u_h^n\big)_{\Omega}
-\sum_{k=1}^{n-1}e^{-\lambda k \tau}w_k\big(u_h^{n-k}, u_h^n\big)_{\Omega},
\end{array}
\end{equation}
where $$F(u_h^n,p_h^n)=(p_h^n)^{-}(u^n_h)^{-} - (\widehat{u}_{h}^{n})(p_h^n)^{-} - (u_h^n)^{-}(\widehat{p}_{h}^{n}),$$
and
\begin{equation}\label{theta}
\begin{split}
\Theta(u_h^n,p_h^n)&= (p_h^n)^{-}(u_h^n)^{-} + (\widehat{u}_h^n)(p_h^n)^{+} + (u_h^n)^{+}(\widehat{p}_h^n)           \\
&\quad- (u_h^n)^{+}(p_h^n)^{+} - (\widehat{u}_h^n)(p_h^n)^{-} - (u_h^n)^{-}(\widehat{p}_h^n).
\end{split}
\end{equation}
Recalling  the numerical flux in (\ref{flux2.5}), we have $\Theta(u_h^n,p_h^n)=0$.
On the other hand, in view of the periodic boundary conditions (\ref{Bvalue}), we get
$$\sum_{j=1}^{N}\big[F_{j+1/2}(u_h^n,p_h^n)-F_{j-1/2}(u_h^n,p_h^n)]
=F_{N+1/2}(u_h^n,p_h^n)-F_{1/2}(u_h^n,p_h^n)=0.$$
Using the Cauchy-Schwartz inequality, we arrive at
\begin{equation}\label{4.11}
\begin{array}{l}
\displaystyle
\|u_h^n\|^2+\kappa_{\alpha}\tau^{\alpha}\|p_h^n\|^2 \leq e^{-\lambda n \tau}\sum_{k=0}^{n-1}{w_k}\|u_h^0\| \|u_h^n\|
-\sum_{k=1}^{n-1}{w_k}e^{-\lambda k \tau}\|u_h^n\|\| u_h^{n-k}\|.
\end{array}
\end{equation}
Therefore, we have
\begin{equation}\label{4.12}
\begin{split}
\|u_h^n\| &\leq e^{-\lambda n \tau}\sum_{k=0}^{n-1}{w_k}\|u_h^0\|
-\sum_{k=1}^{n-1}e^{-\lambda k \tau}{w_k}\|u_h^{n-k}\|           \\
&\leq \sum_{k=0}^{n-1}{w_k}\|u_h^0\| - \sum_{k=1}^{n-1}{w_k}\|u_h^{n-k}\|.
\end{split}
\end{equation}
Next we need to prove the following estimate by mathematical induction
\begin{equation}\label{stable}
\|u_h^n\| \leq \|u_h^0\|.
\end{equation} From the inequality (\ref{4.12}), we can see the
inequality (\ref{stable}) holds obviously when $n=1$. Assuming
\begin{equation*}
\|u_h^m\| \leq \|u_h^0\|, ~~{\rm for}~~m=1,2,\ldots,n-1,
\end{equation*}
then from the inequality (\ref{4.12}), we obtain
\begin{equation*}
\|u_h^n\|
\leq \sum_{k=0}^{n-1} w_k\|u_h^{0}\|- \sum_{k=1}^{n-1}w_k\|u_h^{n-k}\|
\leq \sum_{k=0}^{n-1} w_k\|u_h^{0}\|- \sum_{k=1}^{n-1}w_k\|u_h^{0}\|=\|u_h^{0}\|.
\end{equation*}
The proof is complete.
\end{proof}
%%%%%%%%%%%%%%%%%%%%%%
\begin{theorem}\label{theoremfullerr2} Let $u(x,t_n)$ be the exact solution of the
problem (\ref{1.1})-(\ref{Bvalue}), which is sufficiently smooth such that
$u\in H^{m+1}$ with $0\leq m\leq k+1$. Let $u_h^n$ be the numerical
solution of the fully discrete LDG scheme (\ref{fullyLDG}), then there
holds the following error estimate
\begin{equation}\label{errorestimate}
\begin{split}
\|u(x,t_n)-u_h^n\|\leq C(\tau+h^{k+1}), ~~n=1,\cdots,M,
\end{split}
\end{equation}
where $C$ is a constant depending on $u, T, \alpha$ but independent of $\tau$ and $h$.
\end{theorem}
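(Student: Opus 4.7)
The plan is to adapt the stability argument for (\ref{fullystab}) by carrying a spatial-projection decomposition and a time-truncation residual through the same discrete energy identity, then close the estimate via a fractional discrete Gronwall argument. First I establish consistency: substituting the exact $u(x,t_n)$ and $p(x,t_n)=u_x(x,t_n)$ into the fully discrete weak form (\ref{fullyLDG}) yields a residual of size $\mathcal{O}(\tau)$ coming from (\ref{RLTCH}) with $q=1$, valid uniformly on $[0,T]$ under the assumed smoothness. I then decompose the errors through (\ref{Projection1})--(\ref{Projection2}) as in (\ref{3.13}), $u-u_h^n=\varepsilon_h^n+e_h^n$ and $p-p_h^n=\overline{\varepsilon}_h^n+\overline{e}_h^n$. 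Since $(e_h^n)_x,(\overline{e}_h^n)_x\in P^{k-1}$ and $\mathcal{P}^-u$ matches $u$ at the right interface of each cell while $\mathcal{P}^+p$ matches $p$ at the left, the volume and flux contributions from $\varepsilon_h^n,\overline{\varepsilon}_h^n$ drop out exactly as in the semi-discrete proof, leaving an error equation for $(e_h^n,\overline{e}_h^n)$ whose right-hand side collects the truncation $T^n$, the time-stepping action on the projection residuals $\varepsilon_h^{n-k}$, and the initial piece from $\varepsilon_h^0$.

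Second, I would choose $v=e_h^n$ and $w=\kappa_{\alpha}\tau^{\alpha}\overline{e}_h^n$. The alternating-flux identities for $F$ and $\Theta$ from (\ref{theta}) annihilate all jump contributions exactly as in the proof of (\ref{fullystab}), producing
\begin{equation*}
\|e_h^n\|^2+\kappa_{\alpha}\tau^{\alpha}\|\overline{e}_h^n\|^2\leq e^{-\lambda n\tau}\sum_{k=0}^{n-1}w_k(e_h^0,e_h^n)-\sum_{k=1}^{n-1}e^{-\lambda k\tau}w_k(e_h^{n-k},e_h^n)+(R^n,e_h^n),
\end{equation*}
where $R^n$ lumps the truncation and projection residuals together and is bounded in $L^2$ by $C\tau^{\alpha}(\tau+h^{k+1})$ via Lemma \ref{Projectionrate} together with the uniform summability of $|w_k|$. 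Cauchy--Schwarz, division by $\|e_h^n\|$, and the sign pattern from Lemma \ref{lemma2.1} reduce the estimate to
\begin{equation*}
\|e_h^n\|\leq\sum_{k=0}^{n-1}w_k\|e_h^0\|-\sum_{k=1}^{n-1}w_k\|e_h^{n-k}\|+C(\tau+h^{k+1}).
\end{equation*}

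Finally, a fractional discrete Gronwall argument in the spirit of Lemma \ref{gronwalla}, combined with the bound $\sum_{k=0}^{n-1}w_k\leq 1$ from (\ref{4.7}) and the natural choice $u_h^0=\mathcal{P}^-u_0$, propagates this recursion to $\|e_h^n\|\leq C(\tau+h^{k+1})$ with $C$ depending on $T,\alpha$ but not on $n,\tau,h$; the triangle inequality with $\|\varepsilon_h^n\|\leq Ch^{k+1}$ then delivers (\ref{errorestimate}). The hardest step is this final Gronwall estimate: unlike the proof of (\ref{fullystab}), where the cancellation $\sum_{k=0}^{n-1}w_k-\sum_{k=1}^{n-1}w_k=w_0=1$ closes the induction in one stroke, the forcing term $C(\tau+h^{k+1})$ forces us to absorb an $n$-indexed accumulation of past errors, and this succeeds only because $|w_k|$ is uniformly summable and the Mittag-Leffler bound from Lemma \ref{gronwalla} remains finite for each fixed $T$.
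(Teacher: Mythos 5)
Your overall route coincides with the paper's: the same projection decomposition (\ref{Projection1})--(\ref{Projection2}), the same choice of test functions $v=\mathcal{P^-}e_u^n$, $w=\kappa_\alpha\tau^\alpha\mathcal{P^+}e_p^n$, the same flux cancellations, and the same scalar recursion for $\|\mathcal{P^-}e_u^n\|$ as in (\ref{4.19}), closed by an induction in $n$ and a final triangle inequality with Lemma \ref{Projectionrate}. The paper, however, closes the induction using only $-\sum_{k=1}^{n-1}w_k<1$ together with $e_u^0=0$; no discrete analogue of the Mittag--Leffler Gronwall lemma \ref{gronwalla} is invoked there.

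The genuine gap is in your final absorption step. The facts you lean on, namely $\sum_{k=0}^{n-1}w_k\le 1$ and the summability of $|w_k|$ (indeed $\sum_{k\ge 1}|w_k|=w_0=1$), cannot by themselves absorb the accumulated history: from $a_n\le-\sum_{k=1}^{n-1}w_k\,a_{n-k}+B$ with $a_0=0$ the conclusion one can extract is $a_n\le B\big/\sum_{k=0}^{n-1}w_k$, so it is the \emph{lower} bound in (\ref{4.7}), $\sum_{k=0}^{n-1}w_k>1/(n^\alpha\Gamma(1-\alpha))\ge\tau^\alpha/(T^\alpha\Gamma(1-\alpha))$, that must do the work, and the forcing $B$ must therefore retain a factor $\tau^\alpha$. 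You state the bound $C\tau^\alpha(\tau+h^{k+1})$ for the lumped residual, but then discard the $\tau^\alpha$ in your displayed recursion; after that, the Gronwall step can only deliver $O(\tau^{-\alpha}(\tau+h^{k+1}))$, not (\ref{errorestimate}). Moreover, the claimed $\tau^\alpha$ factor on the projection part of the residual does not follow from Lemma \ref{Projectionrate} plus summability of $|w_k|$ (term-by-term that gives only $Ch^{k+1}$); it requires recognizing $\sum_{k}e^{-\lambda k\tau}w_k\,\varepsilon_u^{n-k}$ as $\tau^\alpha$ times a discrete tempered fractional derivative of the projection error, which is $O(\tau^\alpha h^{k+1})$ because the projection commutes with differentiation in $t$ and $u$ is smooth in time. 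To be fair, the paper's own closing induction is terse at exactly the same point (it also bounds the projection history by $Ch^{k+1}$ and cites only $-\sum_{k=1}^{n-1}w_k<1$), but as written your mechanism --- the upper bound on $\sum_{k=0}^{n-1}w_k$ plus Mittag--Leffler growth --- does not yield the stated rate, so the hardest step you correctly identified is precisely the one your argument leaves open.
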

\begin{proof}
To simplify the notation, we decompose the errors as follows:
\begin{equation}\label{errnotation}
\begin{split}
&e_u^n=u(x,t_n)-\mathcal{P^-}u(x,t_n)+\mathcal{P^-}u(x,t_n)-u_h^n=\mathcal{P^-}e_u^n-\mathcal{P^-}\varepsilon_u^n,   \\
&e_p^n=p(x,t_n)-\mathcal{P^+}p(x,t_n)+\mathcal{P^+}p(x,t_n)-p_h^n=\mathcal{P^+}e_p^n-\mathcal{P^+}\varepsilon_p^n.
\end{split}
\end{equation}
Combining (\ref{weakform}) and (\ref{fullyLDG}), we have
\begin{equation}\label{4.13}
\begin{array}{l}
\begin{split}
&w_{0}\big(e_u^n, v\big)_{\Omega} + \kappa_{\alpha}\tau^{\alpha}\bigg(\big(e_p^n, v_x\big)_{\Omega}
-\sum_{j=1}^{N}\big[((e^n_p)^{+}
v^{-})_{j+\frac{1}{2}} - ((e_p^n)^{+} v^{+})_{j-\frac{1}{2}}\big]\bigg)       \\
&+\big(e_p^n, w\big)_{\Omega} + \big(e_u^n, w_x\big)_{\Omega} - \sum_{j=1}^{N}\big[((e^n_u)^{-}
w^{-})_{j+\frac{1}{2}} - ((e_u^n)^{-} w^{+})_{j-\frac{1}{2}}\big]  \\
& = e^{-\lambda n \tau}\sum_{k=0}^{n-1}w_k\big(e_u^0, v\big)_{\Omega}
-\sum_{k=1}^{n-1}e^{-\lambda k \tau}w_k\big(e_u^{n-k}, v\big)_{\Omega}
-\tau^{\alpha}\big(R^{n}, v\big)_{\Omega}.
\end{split}
\end{array}
\end{equation}
Substituting (\ref{errnotation}) into (\ref{4.13}) and notice that $e_u^0=0$, we can get the error equation
\begin{equation}\label{4.14}
\begin{array}{l}
\displaystyle
w_{0}\big(\mathcal{P^-}e_u^n, v\big)_{\Omega} + \kappa_{\alpha}\tau^{\alpha}\left(\big(\mathcal{P^+}e_p^n, v_x\big)_{\Omega}
-\sum_{j=1}^{N}\big[((\mathcal{P^+}e^n_p)^{+}
v^{-})_{j+\frac{1}{2}} - ((\mathcal{P^+}e_p^n)^{+} v^{+})_{j-\frac{1}{2}}\big]\right)  \\
\displaystyle
+\big(\mathcal{P^+}e_p^n, w\big)_{\Omega} + \big(\mathcal{P^-}e_u^n, w_x\big)_{\Omega}
- \sum_{j=1}^{N}\big[((\mathcal{P^-}e^n_u)^{-}
w^{-})_{j+\frac{1}{2}} - ((\mathcal{P^-}e_u^n)^{-} w^{+})_{j-\frac{1}{2}}\big]  \\
\displaystyle
=-\sum_{k=1}^{n-1}e^{-\lambda k \tau}w_k\big(\mathcal{P^-}e_u^{n-k}, v\big)_{\Omega}
-\tau^{\alpha}\big(R^{n}, v\big)_{\Omega}
\quad +w_{0}\big(\mathcal{P^-}\varepsilon_{p}^{n},v\big)_{\Omega}   \\
\displaystyle
\quad+ \kappa_{\alpha}\tau^{\alpha}\big(\mathcal{P^+}\varepsilon_{p}^{n}, v_x\big)_{\Omega}
-\kappa_{\alpha}\tau^{\alpha}\sum_{j=1}^{N}\big[\big((\mathcal{P^+}\varepsilon_{u}^{n})^{+}
v^{-}\big)_{j+\frac{1}{2}}
- \big((\mathcal{P^+}\varepsilon_{p}^{n})^{+} v^{+}\big)_{j-\frac{1}{2}}\big]
 \\
\displaystyle
\quad+\big(\mathcal{P^+}\varepsilon_{p}^{n}, w\big)_{\Omega}  - \sum_{j=1}^{N}\big[\big((\mathcal{P^-}\varepsilon_{u}^{n})^{-}
w^{-}\big)_{j+\frac{1}{2}} - \big((\mathcal{P^-}\varepsilon_{p}^{n})^{-} w^{+}\big)_{j-\frac{1}{2}}\big]   \\
\displaystyle
\quad+ \sum_{k=1}^{n-1}e^{-\lambda k \tau}w_k\big(\mathcal{P^-}\varepsilon_{u}^{n-k}, v\big)_{\Omega}
+ \big(\mathcal{P^-}\varepsilon_{u}^{n}, w_x\big)_{\Omega}.
\end{array}
\end{equation}
Taking $v=\mathcal{P^-}e_u^n$, $w=\kappa_\alpha\tau^\alpha\mathcal{P^+}e_p^n$ in (\ref{4.14}), we get
\begin{equation}\label{4.15}
\begin{array}{l}
\displaystyle
w_{0}\big(\mathcal{P^-}e_u^n,\mathcal{P^-}e_u^n \big)_{\Omega} +
\kappa_{\alpha}\tau^{\alpha}\big(\mathcal{P^+}e_p^n,\mathcal{P^+}e_p^n \big)_{\Omega}  \\
\displaystyle
=-\sum_{k=1}^{n-1}e^{-\lambda k \tau}w_k\big(\mathcal{P^-}e_u^{n-k}, \mathcal{P^-}e_u^n\big)_{\Omega}
    \\
\displaystyle
-\tau^{\alpha}\big(R^{n}, \mathcal{P^-}e_u^n\big)_{\Omega} + w_{0}\big(\mathcal{P^-}\varepsilon_u^n,\mathcal{P^-}e_u^n\big)_{\Omega}
+ \kappa_{\alpha}\tau^{\alpha}\big(\mathcal{P^+}\varepsilon_p^m, (\mathcal{P^-}e_u^n)_x\big)_{\Omega}     \\
\displaystyle
-\kappa_{\alpha}\tau^{\alpha}\sum_{j=1}^{N}\big[\big((\mathcal{P^+}\varepsilon_p^n)^{+}
(\mathcal{P^-}e_u^n)^{-}\big)_{j+\frac{1}{2}} - \big((\mathcal{P^+}\varepsilon_p^n)^{+}
(\mathcal{P^-}e_u^n)^{+}\big)_{j-\frac{1}{2}}\big]         \\
\displaystyle
+\kappa_\alpha\tau^\alpha\big(\mathcal{P^+}\varepsilon_p^n, \mathcal{P^+}e_p^n\big)_{\Omega}
+ \kappa_\alpha\tau^\alpha \big(\mathcal{P^-}\varepsilon_u^n, (\mathcal{P^+}e_p^n)_x\big)_{\Omega}   \\
\displaystyle
- \kappa_\alpha\tau^\alpha \sum_{j=1}^{N}\big[\big((\mathcal{P^-}\varepsilon_u^n)^{-}
(\mathcal{P^+}e_p^n)^{-}\big)_{j+\frac{1}{2}} - \big((\mathcal{P^-}\varepsilon_u^n)^{-}
(\mathcal{P^+}e_p^n)^{+}\big)_{j-\frac{1}{2}}\big]      \\
\displaystyle
+ \sum_{k=1}^{n-1}e^{-\lambda k \tau}w_k\big(\mathcal{P^-}\varepsilon_u^{n-k}, \mathcal{P^-}e_u^n\big)_{\Omega}.
\end{array}
\end{equation}
Using the properties of projections $\mathcal{P^\pm}$ and $w_0=1$, we can further get
\begin{equation}\label{4.16}
\begin{array}{l}
\displaystyle
\|(\mathcal{P^-}e_u^n)\|^2 + \kappa_{\alpha}\tau^{\alpha}\|(\mathcal{P^+}e_p^n)\|^2 \\
\displaystyle
=-\sum_{k=1}^{n-1}e^{-\lambda k \tau}w_k\big(\mathcal{P^-}e_u^{n-k}, \mathcal{P^-}e_u^n\big)_{\Omega}
-\tau^{\alpha}\big(R^{n}, \mathcal{P^-}e_u^n\big)_{\Omega}    \\
\displaystyle
+\big(\mathcal{P^-}\varepsilon_u^n,\mathcal{P^-}e_u^n\big)_{\Omega}
+\kappa_\alpha\tau^\alpha\big(\mathcal{P^+}\varepsilon_p^n, \mathcal{P^+}e_p^n\big)_{\Omega}   \\
\displaystyle
+ \sum_{k=1}^{n-1}e^{-\lambda k \tau}w_k\big(\mathcal{P^-}\varepsilon_u^{n-k}, \mathcal{P^-}e_u^n\big)_{\Omega}.
\end{array}
\end{equation}
Applying the Cauchy-Schwarz inequality, we have
\begin{equation}\label{4.17}
\begin{array}{l}
\displaystyle
\|(\mathcal{P^-}e_u^n)\|^2  + \kappa_{\alpha}\tau^{\alpha}\|(\mathcal{P^+}e_p^n)\|^2  \\
\displaystyle
\leq -\sum_{k=1}^{n-1}e^{-\lambda k \tau}w_k\|\mathcal{P^-}e_u^{n-k}\| ||\mathcal{P^-}e_u^n||
+\tau^{\alpha}\|R^{n}\| \|\mathcal{P^-}e_u^n\|    \\
\displaystyle
+\|\mathcal{P^-}\varepsilon_u^n\| \|\mathcal{P^-}e_u^n\|
+\kappa_\alpha\tau^\alpha\|\mathcal{P^+}\varepsilon_p^n\| \|\mathcal{P^+}e_p^n\|   \\
\displaystyle
- \sum_{k=1}^{n-1}e^{-\lambda k \tau}w_k \|\mathcal{P^-}\varepsilon_{u}^{n-k}\| \|\mathcal{P^-}e_u^n\|.
\end{array}
\end{equation}
Combining the inequality $2ab\leq a^2 + b^2$ and $e^{-\lambda k \tau}\in (0,1]$,
from the above inequality (\ref{4.17}), we can derive
\begin{equation}\label{4.18}
\begin{array}{l}
\displaystyle
\|(\mathcal{P^-}e_u^n)\|^2  + \kappa_{\alpha}\tau^{\alpha}\|(\mathcal{P^+}e_p^n)\|^2
\\
\displaystyle
\leq \frac{1}{2} \big(-\sum_{k=1}^{n-1}w_k\|\mathcal{P^-}e_u^{n-k}\|
+\tau^{\alpha}\|R^{n}\|+\|\mathcal{P^-}\varepsilon_u^n\|   \\
\displaystyle
- \sum_{k=1}^{n-1}w_k \|\mathcal{P^-}\varepsilon_u^{n-k}\| \big)^2
+\frac{1}{2} \|\mathcal{P^-}e_u^n\|^2   \\
\displaystyle
+\frac{1}{2} \kappa_\alpha\tau^\alpha\|\mathcal{P^+}\varepsilon_p^n\|^2
+\frac{1}{2} \kappa_\alpha\tau^\alpha\|\mathcal{P^+}e_p^n\|^2.
\end{array}
\end{equation}
Moreover, we have
\begin{equation}\label{4.19}
\begin{array}{l}
\displaystyle
\|(\mathcal{P^-}e_u^n)\|
\leq -\sum_{k=1}^{n-1}w_k\|\mathcal{P^-}e_u^{n-k}\|
+\tau^{\alpha}\|R^{n}\|     \\
\displaystyle
\quad\quad\quad\quad+\|\mathcal{P^-}\varepsilon_u^n\|- \sum_{k=1}^{n-1}w_k \|\mathcal{P^-}\varepsilon_u^{n-k}\|
+\sqrt{\kappa_\alpha\tau^\alpha} \|\mathcal{P^+}\varepsilon_p^n\|.
\end{array}
\end{equation}
Next, we prove the following estimate by mathematical introduction
\begin{equation}\label{4.20}
\|(\mathcal{P^-}e_u^n)\| \leq C(\tau+h^{k+1}).
\end{equation}
For $n=1$, using the properties (\ref{Prorate}) of the projections $\mathcal{P^\pm}$, it can be seen that
the inequality (\ref{4.20}) holds obviously. Assuming
$$\|(\mathcal{P^-}e_u^m)\| \leq C(\tau+h^{k+1}), ~~{\rm for} ~m=1,2\cdots,n-1.$$
Remembering $-\sum_{k=1}^{n-1}w_k<1 $ and the properties of the projections $\mathcal{P^\pm}$, we have
\begin{equation}\label{err}
\begin{split}
\|(\mathcal{P^-}e_u^n)\|
\leq C(\tau+h^{k+1}).
\end{split}
\end{equation}
Finally, combining the triangle inequality and lemma \ref{Projectionrate} to have
\begin{eqnarray} \nonumber
\|u(x,t_n)-u_h^n\| &=& \|\mathcal{P^-}e_u^n - \mathcal{P^-}\varepsilon_u^n\|  \\   \nonumber
\displaystyle
&\leq& \|\mathcal{P^-}e_u^n\| + \|\mathcal{P^-}\varepsilon_u^n\|   \\   \nonumber
\displaystyle
&\leq& C(\tau+h^{k+1}).
\end{eqnarray}
\end{proof}
%%%%%%%%%%%%%%%%%%%%%%%%%%%%%%%
\section{Numerical experiments}\label{sec:5}
In this section, we perform three examples to illustrate the effectiveness
of our numerical schemes and confirm  our theoretical results.
%%%%%%%%%%%%%%%%%%%%%%%%%%%%%
\begin{example}\label{example1}
Without loss of generality, we add a force term $f(x,t)$ on the right hand side of the equation (\ref{1.1}), we consider
\begin{equation}\label{5.1}
 {_{0}^{C}\!D}_{t}^{\alpha,\lambda}u(x,t)=\kappa_{\alpha}u_{xx}(x,t)+f(x,t), ~~(x,t)\in[0,1]\times(0,T],\\
\end{equation}
with periodic boundary conditions $u(x+\pi,t)=u(x,t)$ and initial condition
$u(x,0)=sin(2\pi x)$. If we take the force term $f(x,t)$ as
$f(x,t)=e^{-\lambda t}\bigg(\frac{\Gamma(\beta+1)}{\Gamma(\beta+1-\alpha)}t^{\beta-\alpha}
+4\kappa_{\alpha}\pi^2(t^{\beta}+1)\bigg)sin(2\pi x),$
then the exact solution of the problem (\ref{5.1}) with the corresponding initial-boundary condition gives
$$ u(x,t)=e^{-\lambda t}(t^{\beta}+1)sin(2\pi x).$$
\end{example}
%%%%%%%%%%%%%%%%%%%%%%%%%%%%%%%%%%%%%%%%%%%%
%%%%%%=======================================================
\begin{table}[htp]
%\centering
\caption{The $L^2$ errors and convergence orders for different $\lambda$ at $T=1$ with $\alpha=0.5$, $\tau=h^{2/3}$.}
\vspace{0.2cm}
\begin{small}
  \begin{tabular}{c|c|c|c|c|c|c|c}
\hline
&  &\multicolumn{2}{c|}{$\lambda=0.8$} & \multicolumn{2}{c|}{$\lambda=1.5$} & \multicolumn{2}{c}{$\lambda=2$}\\
\cline{3-4} \cline{5-6} \cline{7-8}
$k$& $h$ &$\|\cdot\|$-error  & order & $\|\cdot\|$-error & order & $\|\cdot\|$-error & order\\
\hline
\multirow{4}{*}{$1$}
   &1/10 & 1.757029e-02 &         &8.266055e-03 &        &4.823736e-03 &         \\
   &1/20 & 4.472764e-03 & 1.9739  &2.091684e-03 & 1.9825 &1.215416e-03 & 1.9887  \\
   &1/40 & 9.995738e-04 & 2.1618  &4.874263e-04 & 2.1014 &2.918227e-04 & 2.0583  \\
   &1/80 & 2.487744e-04 & 2.0065  &1.215344e-04 & 2.0038 &7.285848e-05 & 2.0019  \\
 \hline
  \end{tabular}
\label{tabexmone1}
\end{small}
\end{table}
%%%%%%=======================================================
%%%%%%=======================================================
\begin{table}[htp]
\centering
\caption{The $L^2$ errors and convergence orders for different $\lambda$ at $T=1$ with $\alpha=0.5$, $\tau=h^{3/2}$.}
\vspace{0.2cm}
\begin{small}
  \begin{tabular}{c|c|c|c|c|c|c|c}
\hline
&  &\multicolumn{2}{c|}{$\lambda=0.8$} & \multicolumn{2}{c|}{$\lambda=1.5$} & \multicolumn{2}{c}{$\lambda=2$}\\
\cline{3-4} \cline{5-6} \cline{7-8}
$k$& $h$ &$\|\cdot\|$-error  & order & $\|\cdot\|$-error & order & $\|\cdot\|$-error & order\\
\hline
\multirow{4}{*}{$2$}
   &1/10 & 7.817372e-04 &         &3.849712e-04 &        &2.321083e-04 &        \\
   &1/20 & 9.719738e-05 & 3.0077  &4.805674e-05 & 3.0019 &2.905722e-05 & 2.9978  \\
   &1/40 & 1.203610e-05 & 3.0135  &5.976656e-06 & 3.0073 &3.624898e-06 & 3.0029  \\
   &1/80 & 1.506415e-06 & 2.9982  &7.477283e-07 & 2.9988 &4.533749e-07 & 2.9992  \\
 \hline
  \end{tabular}
\label{tabexmone2}
\end{small}
\end{table}
%%%%%%=======================================================
\begin{table}[htp]
\centering
\caption{The $L^2$ errors and convergence orders for different $\lambda$ at $T=1$ with $\alpha=0.5$, $\tau=h^{4/3}$.}
\vspace{0.2cm}
\begin{small}
  \begin{tabular}{c|c|c|c|c|c|c|c}
\hline
&  &\multicolumn{2}{c|}{$\lambda=0.8$} & \multicolumn{2}{c|}{$\lambda=1.5$} & \multicolumn{2}{c}{$\lambda=2$}\\
\cline{3-4} \cline{5-6} \cline{7-8}
$k$ & $h$ &$\|\cdot\|$-error  & order & $\|\cdot\|$-error & order & $\|\cdot\|$-error & order\\
\hline
\multirow{4}{*}{$3$}
   &1/10 & 3.174252e-05 &         &1.553122e-05 &        &9.321072e-06 &        \\
   &1/20 & 1.971099e-06 & 4.0093  &9.698783e-07 & 4.0012 &5.844179e-07 & 3.9954  \\
   &1/40 & 1.211423e-07 & 4.0242  &6.009536e-08 & 4.0125 &3.642279e-08 & 4.0041  \\
   &1/80 & 7.570034e-09 & 4.0003  &3.756951e-09 & 3.9996 &2.277746e-09 & 3.9992  \\
 \hline
  \end{tabular}
\label{tabexmone3}
\end{small}
\end{table}
%%%%%%=======================================================
\begin{table}[htp]
\centering
\caption{The $L^2$ errors and convergence orders for different $\lambda$ at $T=1$ with $\alpha=0.5$, $\tau=h^{5/4}$.}
\vspace{0.2cm}
\begin{small}
  \begin{tabular}{c|c|c|c|c|c|c|c}
\hline
&  &\multicolumn{2}{c|}{$\lambda=0.8$} & \multicolumn{2}{c|}{$\lambda=1.5$} & \multicolumn{2}{c}{$\lambda=2$}\\
\cline{3-4} \cline{5-6} \cline{7-8}
$k$ & $h$ &$\|\cdot\|$-error  & order & $\|\cdot\|$-error & order & $\|\cdot\|$-error & order\\
\hline
\multirow{4}{*}{$4$}
   &1/5  & 4.435466e-05 &         &2.097285e-05 &        &1.228325e-05 &        \\
   &1/10 & 1.380329e-06 & 5.0060  &6.796152e-07 & 4.9477 &4.096977e-07 & 4.9060  \\
   &1/20 & 4.402038e-08 & 4.9707  &2.160624e-08 & 4.9752 &1.299606e-08 & 4.9784  \\
   &1/40 & 1.374369e-09 & 5.0013  &6.805691e-10 & 4.9886 &4.119552e-10 & 4.9794  \\
 \hline
  \end{tabular}
\label{tabexmone4}
\end{small}
\end{table}
%%%%%%=======================================================
\begin{table}[htp]
\centering
\caption{The $L^2$ errors and convergence orders for different $\lambda$ at $T=1$ with $\alpha=0.5$, $\tau=h^{3/2}$.}
\vspace{0.2cm}
\begin{small}
  \begin{tabular}{c|c|c|c|c|c|c|c}
\hline
&  &\multicolumn{2}{c|}{$\lambda=0.8$} & \multicolumn{2}{c|}{$\lambda=1.5$} & \multicolumn{2}{c}{$\lambda=2$}\\
\cline{3-4} \cline{5-6} \cline{7-8}
$k$ & $h$ &$\|\cdot\|$-error  & order & $\|\cdot\|$-error & order & $\|\cdot\|$-error & order\\
\hline
\multirow{4}{*}{$5$}
   &1/5  & 6.414671e-06 &         &3.026083e-06 &        &1.769351e-06 &        \\
   &1/10 & 1.061861e-07 & 5.9167  &5.229200e-08 & 5.8547 &3.152810e-08 & 5.8104  \\
   &1/20 & 1.689493e-09 & 5.9739  &8.353260e-10 & 5.9681 &5.050749e-10 & 5.9640  \\
   &1/40 & 2.658552e-11 & 5.9898  &1.320140e-11 & 5.9836 &8.006770e-12 & 5.9791  \\
 \hline
  \end{tabular}
\label{tabexmone5}
\end{small}
\end{table}
%%%%%=============================================
%%%%%%=======================================================
\begin{table}[htp]
\centering
\caption{The $L^2$ errors and convergence orders for different $\alpha$, $\lambda$ at $T=1$
with $\tau=h^{3/2}$, $P^5$.}
\vspace{0.2cm}
\begin{small}
  \begin{tabular}{c|c|c|c|c|c|c|c}
\hline
&  &\multicolumn{2}{c|}{$\lambda=0.8$} & \multicolumn{2}{c|}{$\lambda=1.5$} & \multicolumn{2}{c}{$\lambda=2$}\\
\cline{3-4} \cline{5-6} \cline{7-8}
$\alpha$ & $h$ &$\|\cdot\|$-error  & order & $\|\cdot\|$-error & order & $\|\cdot\|$-error & order\\
\hline
\multirow{4}{*}{$0.3$}
   &1/10 & 3.949105e-06 &         &1.862967e-06 &        &1.089277e-06 &      \\
   &1/20 & 6.459789e-08 & 5.9339  &3.181162e-08 & 5.8719 &1.917998e-08 & 5.8276 \\
   &1/40 & 1.030441e-09 & 5.9702  &5.094751e-10 & 5.9644 &3.080510e-10 & 5.9603 \\
\hline
\multirow{4}{*}{$0.6$}
   &1/10 & 7.681143e-06 &         &3.623533e-06 &        &2.118680e-06 &     \\
   &1/20 & 1.271142e-07 & 5.9171  &6.259814e-08 & 5.8551 &3.774191e-08 & 5.8109 \\
   &1/40 & 2.017666e-09 & 5.9773  &9.975830e-10 & 5.9715 &6.031825e-10 & 5.9674 \\
\hline
\multirow{4}{*}{$0.9$}
   &1/10 & 1.101658e-05 &         &5.197007e-06 &        &3.038690e-06 &       \\
   &1/20 & 1.808903e-07 & 5.9284  &8.908050e-08 & 5.8664 &5.370876e-08 & 5.8221 \\
   &1/40 & 2.847323e-09 & 5.9894  &1.407785e-09 & 5.9836 &8.512089e-10 & 5.9795 \\
\hline
  \end{tabular}
\label{tabexmone6}
\end{small}
\end{table}
%%%%%%%%%%%%%%%%%%%%%%%%%%%%%
%%%%%%=======================================================
\begin{table}[htp]
\centering
\caption{The $L^2$ errors and convergence orders for different $\lambda$ with fixed $h=1/500$ and $P^2$.}
\vspace{0.2cm}
\begin{small}
  \begin{tabular}{c|c|c|c|c|c|c|c}
\hline
&  &\multicolumn{2}{c|}{$\lambda=0.8$} & \multicolumn{2}{c|}{$\lambda=1.5$} & \multicolumn{2}{c}{$\lambda=3$}\\
 \cline{3-4} \cline{5-6} \cline{7-8}
$q$ & $\tau$ &$\|\cdot\|$-error  & order & $\|\cdot\|$-error & order & $\|\cdot\|$-error & order\\
\hline
\multirow{4}{*}{$1$}
   &1/5  & 6.268259e-05 &         &3.254490e-05 &        &8.428462e-06 &        \\
   &1/10 & 2.943336e-05 & 1.0906  &1.563827e-05 & 1.0574 &4.264206e-06 & 0.9830  \\
   &1/20 & 1.418827e-05 & 1.0528  &7.640964e-06 & 1.0333 &2.143689e-06 & 0.9922  \\
   &1/40 & 6.955695e-06 & 1.0284  &3.773311e-06 & 1.0179 &1.074573e-06 & 0.9963  \\
 \hline
  \end{tabular}
\label{tabexmone7}
\end{small}
\end{table}
%%%%%=============================================
%%%%%%=======================================================
\begin{table}[htp]
\centering
\caption{The $L^2$ errors and convergence orders for different $\lambda$ with fixed $h=1/300$ and $P^3$.}
\vspace{0.2cm}
\begin{small}
  \begin{tabular}{c|c|c|c|c|c|c|c}
\hline
&  &\multicolumn{2}{c|}{$\lambda=0.8$} & \multicolumn{2}{c|}{$\lambda=1.5$} & \multicolumn{2}{c}{$\lambda=3$}\\
 \cline{3-4} \cline{5-6} \cline{7-8}
$q$ & $\tau$ &$\|\cdot\|$-error  & order & $\|\cdot\|$-error & order & $\|\cdot\|$-error & order\\
\hline
\multirow{4}{*}{$2$}
   &1/5  & 4.081267e-05 &         &2.158503e-05 &        &5.934015e-06 &        \\
   &1/10 & 1.131338e-05 & 1.8510  &6.164704e-06 & 1.8079 &1.817960e-06 & 1.7067  \\
   &1/20 & 2.989321e-06 & 1.9201  &1.657132e-06 & 1.8953 &5.083344e-07 & 1.8385  \\
   &1/40 & 7.690827e-07 & 1.9586  &4.302745e-07 & 1.9454 &1.347830e-07 & 1.9151  \\
 \hline
  \end{tabular}
\label{tabexmone8}
\end{small}
\end{table}
%%%%%=============================================
%%%%%%=======================================================
\begin{table}[htp]
\centering
\caption{The $L^2$ errors and convergence orders for different $\lambda$ with fixed $h=1/200$ and $P^4$.}
\vspace{0.2cm}
\begin{small}
  \begin{tabular}{c|c|c|c|c|c|c|c}
\hline
&  &\multicolumn{2}{c|}{$\lambda=0.8$} & \multicolumn{2}{c|}{$\lambda=1.5$} & \multicolumn{2}{c}{$\lambda=3$}\\
 \cline{3-4} \cline{5-6} \cline{7-8}
$q$ & $\tau$ &$\|\cdot\|$-error  & order & $\|\cdot\|$-error & order & $\|\cdot\|$-error & order\\
\hline
\multirow{4}{*}{$3$}
   &1/5  & 2.470358e-05 &         &1.349055e-05 &        &4.084818e-06 &        \\
   &1/10 & 3.798010e-06 & 2.7014  &2.168691e-06 & 2.6371 &7.345770e-07 & 2.4753  \\
   &1/20 & 5.249281e-07 & 2.8551  &3.072493e-07 & 2.8193 &1.108525e-07 & 2.7283  \\
   &1/40 & 6.894422e-08 & 2.9286  &4.087384e-08 & 2.9102 &1.523835e-08 & 2.8629  \\
 \hline
  \end{tabular}
\label{tabexmone9}
\end{small}
\end{table}
%%%%%=============================================
%%%%%%=======================================================
\begin{table}[htp]
\centering
\caption{The $L^2$ errors and convergence orders for different $\lambda$ with fixed $h=1/100$ and $P^5$.}
\vspace{0.2cm}
\begin{small}
  \begin{tabular}{c|c|c|c|c|c|c|c}
\hline
&  &\multicolumn{2}{c|}{$\lambda=0.8$} & \multicolumn{2}{c|}{$\lambda=1.5$} & \multicolumn{2}{c}{$\lambda=3$}\\
 \cline{3-4} \cline{5-6} \cline{7-8}
$q$ & $\tau$ &$\|\cdot\|$-error  & order & $\|\cdot\|$-error & order & $\|\cdot\|$-error & order\\
\hline
\multirow{4}{*}{$4$}
   &1/5  & 1.842158e-05 &         &1.074646e-05 &        &3.850820e-06 &        \\
   &1/10 & 1.452152e-06 & 3.6651  &9.187289e-07 & 3.5481 &4.024233e-07 & 3.2584  \\
   &1/20 & 1.010676e-07 & 3.8448  &6.674299e-08 & 3.7830 &3.267246e-08 & 3.6226  \\
   &1/40 & 6.652413e-09 & 3.9253  &4.489239e-09 & 3.8941 &2.325680e-09 & 3.8124  \\
 \hline
  \end{tabular}
\label{tabexmone10}
\end{small}
\end{table}
The $L^2$ errors and orders of the fully discrete LDG scheme (\ref{fullyLDG}) on uniform meshes are present in Table \ref{tabexmone1}-Table \ref{tabexmone10}. Table \ref{tabexmone1}-Table \ref{tabexmone5} list the $L^2$ errors and orders of accuracy for schemes (\ref{fullyLDG}) with different $k$ and fixed $\alpha=0.5$. In these tests we take $\tau=h^{(k+1)/q}$.
All the numerical results given in Table \ref{tabexmone1}-Table \ref{tabexmone5} are consistent with the theoretical analysis which presented in theorem \ref{theoremfullerr2}. Table \ref{tabexmone6} shows the errors and orders of scheme (\ref{fullyLDG}) for solving the problem \eqref{5.1} with the different parameters $\alpha$ and $\lambda$.
As expected, we observe that our scheme can achieve higher order accuracy in space, as well as in time. To test the high order of scheme (\ref{fullyLDG}) in time direction, we list the errors and orders of scheme (\ref{fullyLDG}) in Table \ref{tabexmone7}-Table \ref{tabexmone10}. We can again clearly observe the desired orders of accuracy from
these tables.
\\
%%%%%%%%%%%%%%%%%%%%%%%%%%%%%%%%%%%%%%%%%%%%%
\begin{example}\label{example2}
 In this example, we examine the following
 homogeneous  equation
\begin{equation}\label{5.2}
_{0}^{C}D_{t}^{\alpha,\lambda}u(x,t)= u_{xx}(x,t),(x,t)\in(0,1)\times(0,T],
\end{equation}
subjects to the boundary conditions
$u(0,t)=0,u(1,t)=0$,
and the initial value $u(x,0)=\sin(2\pi x).$
We can check that the exact solution of this initial-boundary value problem (\ref{5.2}) is
$u(x,t)=e^{-\lambda t}E_{\alpha}(- ~4\pi^2t^{\alpha})\sin(2\pi x),$
where the generalized Mittag-Leffler function $E_{\alpha}(\cdot)$ defined in  (\ref{defM-L}).
\end{example}
In this test, the finite element space is piecewise linear and
piecewise quadratic polynomials for the second and third order schemes, respectively.
The numerical results are shown in Table \ref{tabexmtwo1}-Table \ref{tabexmtwo2}.
The evolution of numerical solutions with different
  $\alpha, \lambda$ at different times are given in Fig. \ref{fig:example2a}.
%%%%%%%%%%%%%%%%%%%%%%%
%%%%%%======================================================
\begin{table}[htp]
\centering
\caption{The $L^2$ errors and orders of accuracy of problem \eqref{5.2} calculated by the fully LDG schemes (\ref{fullyLDG}) for different $\lambda$ with $\alpha=0.5$, $\tau=h^2$, and $T=1$.}
\vspace{0.2cm}
\begin{small}
  \begin{tabular}{c|c|c|c|c|c|c|c}
\hline
&  &\multicolumn{2}{c|}{$\lambda=0.5$} & \multicolumn{2}{c|}{$\lambda=2.5$} & \multicolumn{2}{c}{$\lambda=5$}\\
\cline{3-4} \cline{4-5}\cline{6-8}
$k$ & $h$ &$\|\cdot\|$-error  & order & $\|\cdot\|$-error & order & $\|\cdot\|$-error & order\\
\hline
\multirow{3}{*}{$ 1$}
   &1/5  & 5.969030e-04 &         &8.078203e-05 &        &6.630993e-06  &          \\
   &1/10 & 1.490511e-04 & 2.0017  &2.017187e-05 & 2.0017 &1.655808e-06  &2.0017     \\
   &1/20 & 3.725150e-05 & 2.0004  &5.041443e-06 & 2.0004 &4.138268e-07  &2.0004  \\
\hline
  \end{tabular}
\label{tabexmtwo1}
\end{small}
\end{table}
%%%%%%%%%%%%%%%%%%%%%%%%%%%%%%%%%%%%%
%%%%%%=======================================================
\begin{table}[htp]
\centering
\caption{The $L^2$ errors and orders of accuracy of problem \eqref{5.2} calculated by the fully LDG schemes (\ref{fullyLDG}) for different $\lambda$ with $\alpha=0.5$, $\tau=h^3$, and $T=1$.}
\vspace{0.2cm}
\begin{small}
  \begin{tabular}{c|c|c|c|c|c|c|c}
\hline
&  &\multicolumn{2}{c|}{$\alpha=0.1$} & \multicolumn{2}{c|}{$\alpha=0.5$} & \multicolumn{2}{c}{$\alpha=0.9$}\\
\cline{3-4} \cline{4-5}\cline{6-8}
$k$ & $h$ &$\|\cdot\|$-error  & order & $\|\cdot\|$-error & order & $\|\cdot\|$-error & order\\
\hline
\multirow{3}{*}{$ 2$}
   &1/5  & 2.100809e-05 &         &1.359967e-05 &        &6.184385e-06  &          \\
   &1/10 & 2.669664e-06 & 2.9762  &1.723894e-06 & 2.9798 &7.878049e-07  &2.9727     \\
   &1/20 & 3.351008e-07 & 2.9940  &2.163795e-07 & 2.9940 &1.080580e-07  &2.8660   \\
\hline
  \end{tabular}
\label{tabexmtwo2}
\end{small}
\end{table}
%%%%%%%=========================================
%%%%%%%=========================================
\begin{figure}[htp]
    \subfigure[$\alpha=0.5,t=1$]{
        \label{fig:mini:subfig:a} %% label for first subfigure
        \begin{minipage}[b]{0.5\textwidth}
            \centering
            \includegraphics[scale=.25]{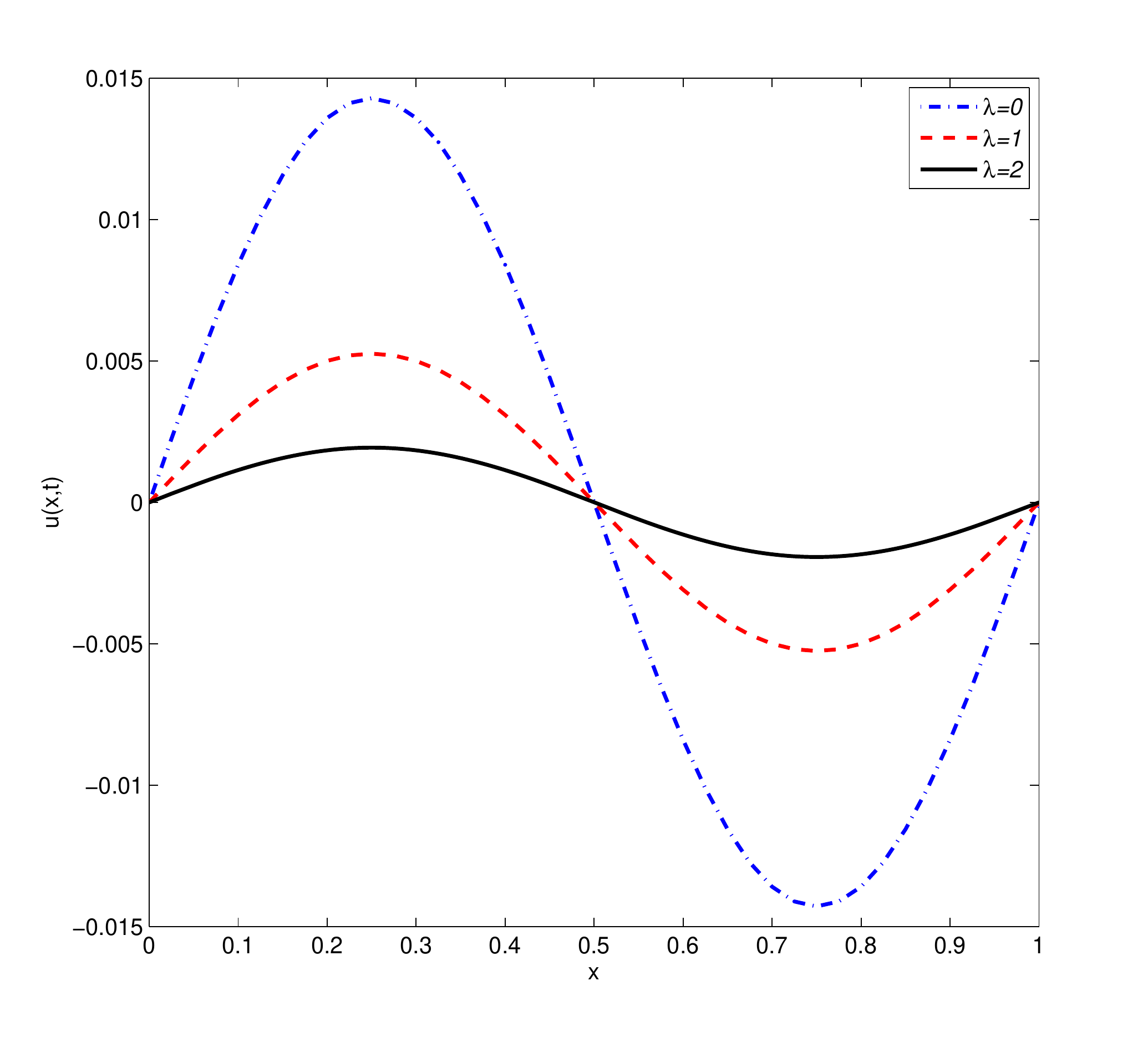}
        \end{minipage}
        } \hspace*{-20pt} %
    \subfigure[$\lambda=2,t=1$]{
        \label{fig:mini:subfig:b} %% label for second subfigure
        \begin{minipage}[b]{0.4\textwidth}
            \centering
            \includegraphics[scale=.25]{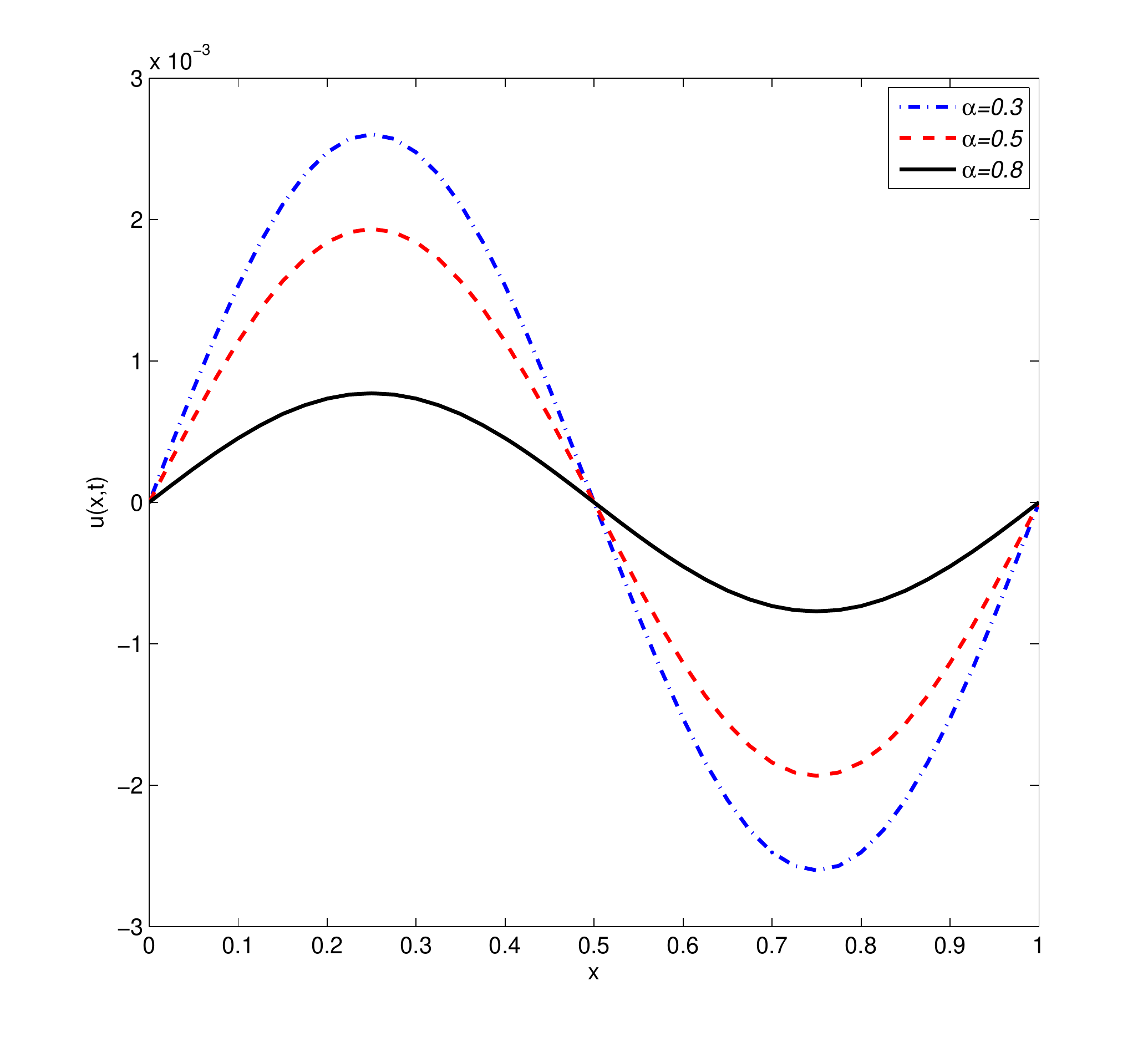}
        \end{minipage}
        }   \vspace*{-10pt}  \\
        \subfigure[$\alpha=0.5, \lambda=2$]{
        \label{fig:mini:subfig:b} %% label for second subfigure
        \begin{minipage}[b]{0.5\textwidth}
            \centering
            \includegraphics[scale=.25]{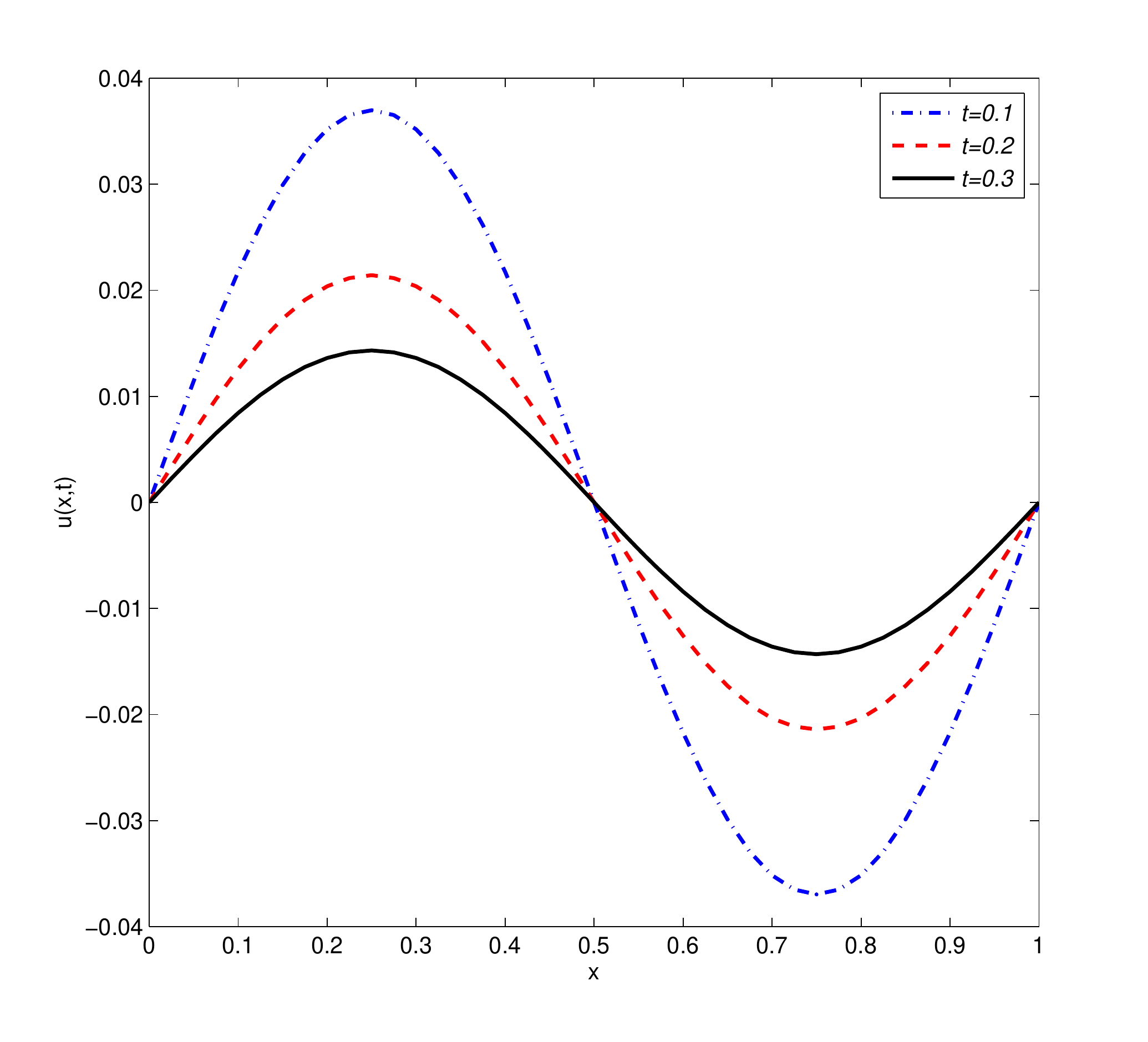}
        \end{minipage}
        }  \hspace*{-20pt}
        \subfigure[$\alpha=0.5, \lambda=2$]{
        \label{fig:mini:subfig:b} %% label for second subfigure
        \begin{minipage}[b]{0.4\textwidth}
            \centering
            \includegraphics[scale=.25]{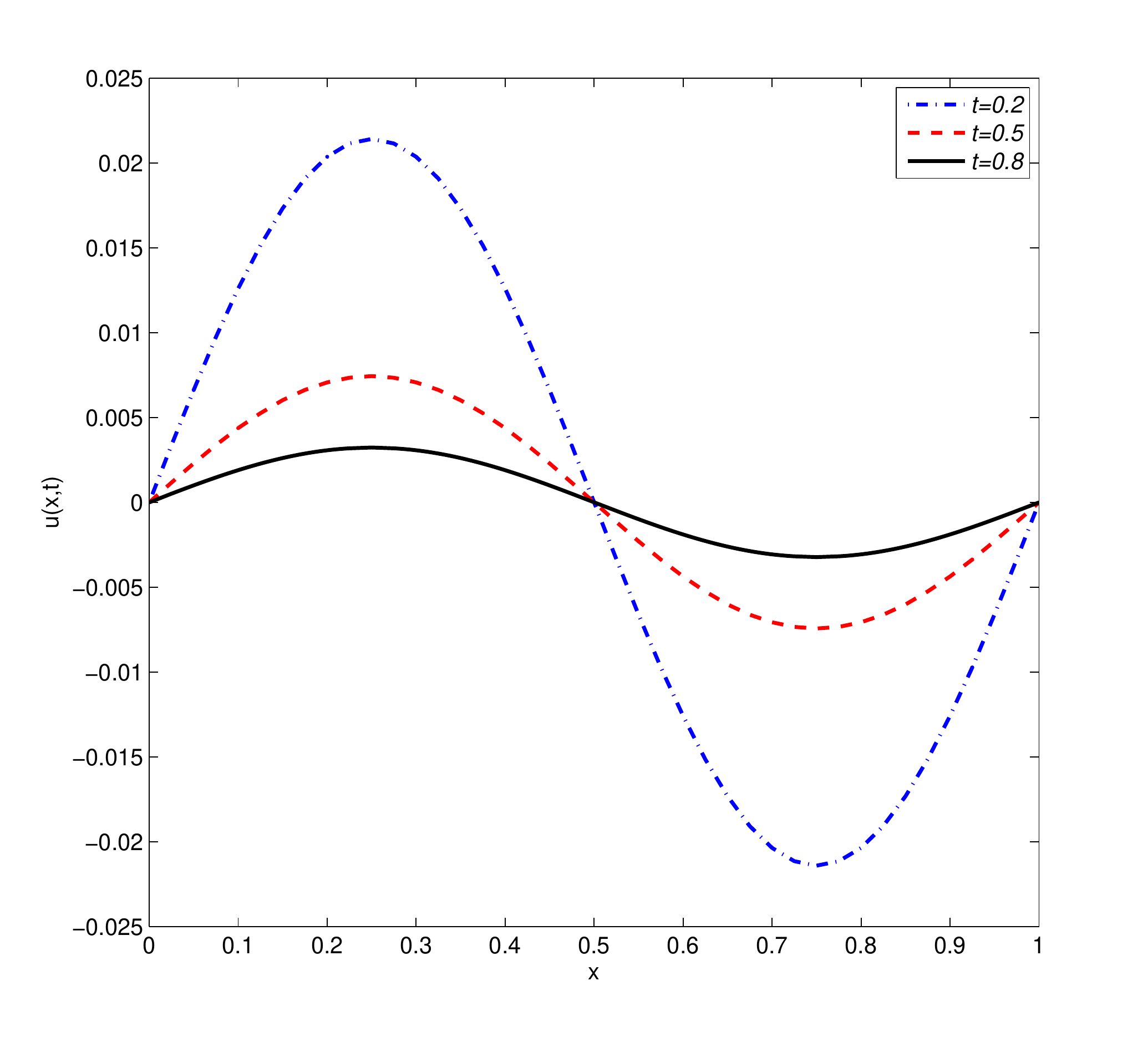}
        \end{minipage}
        }  \vspace*{-15pt}
    \caption{The evolution of $u(x,t)$ under different parameters $\alpha, \lambda$
    at different times.}\label{fig:example2a}
\end{figure}

%%%%%%%%%%%%%%%%%%%%%%%%%%%%%%%%%%%%%%%
\begin{example}\label{example3}
In this example, we will test the dynamics behavior of the tempered fractional diffusion equation (\ref{5.2}) with  homogeneous Dirichlet boundary conditions on a finite domain $[-4,4]$. We take the Gaussian function
\begin{equation}\label{5.3}
u(x,0)=\frac{1}{\sigma\sqrt{2\pi}}{\rm {exp}}\big( -\frac{x^2}{2\sigma^2}\big),
\end{equation}
as the initial condition.
\end{example}
The numerical results for this example are calculated by the fully discrete scheme (\ref{fullyLDG}). In the computation, we set $h=1/40, \tau=h^2, \sigma=0.01.$
The probability density function of a diffusion particle for different values of $\alpha, \lambda$ at different times are given in Fig. \ref{fig:example3}. It can be seen that, the different parameters $\alpha, \lambda, t$ have different effect for the probability density
of a particle, which is in agreement with the  analytic results  given in \cite{Henry:06,Langlands:08}.
 The effectiveness of our numerical schemes is confirmed once again.
%%%%%%%=========================================
\begin{figure}[thb]
    \subfigure[$\alpha=0.5, t=0.1$]{
        \label{fig:mini:subfig:a} %% label for first subfigure
        \begin{minipage}[b]{0.5\textwidth}
            \centering
            \includegraphics[scale=.25]{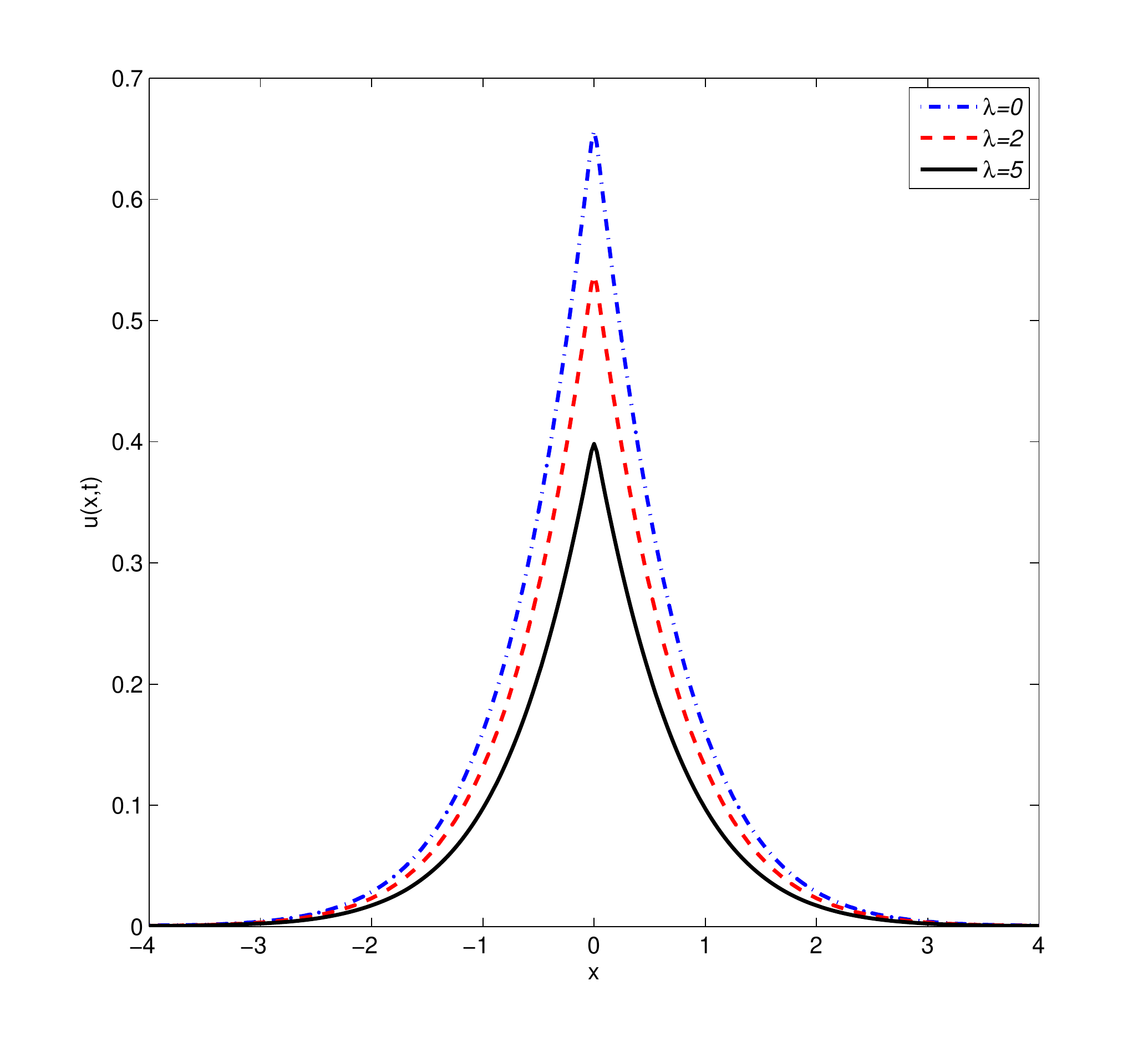}
        \end{minipage}
        } \hspace*{-20pt} %
    \subfigure[$\lambda=2, t=0.1$]{
        \label{fig:mini:subfig:b} %% label for second subfigure
        \begin{minipage}[b]{0.4\textwidth}
            \centering
            \includegraphics[scale=.25]{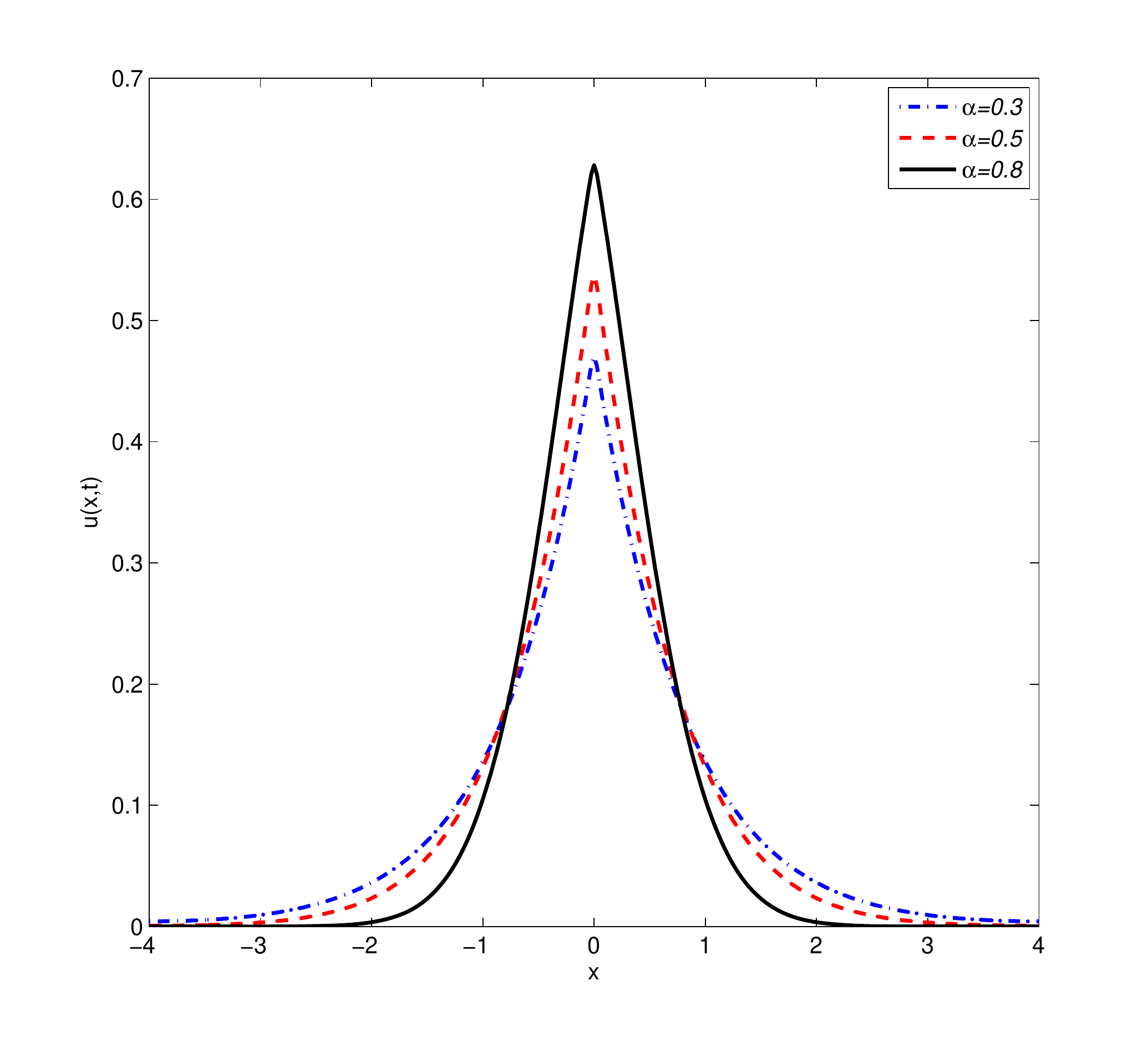}
        \end{minipage}
        }   \vspace*{-10pt}  \\
        \subfigure[$\alpha=0.5, \lambda=2$]{
        \label{fig:mini:subfig:b} %% label for second subfigure
        \begin{minipage}[b]{0.5\textwidth}
            \centering
            \includegraphics[scale=.25]{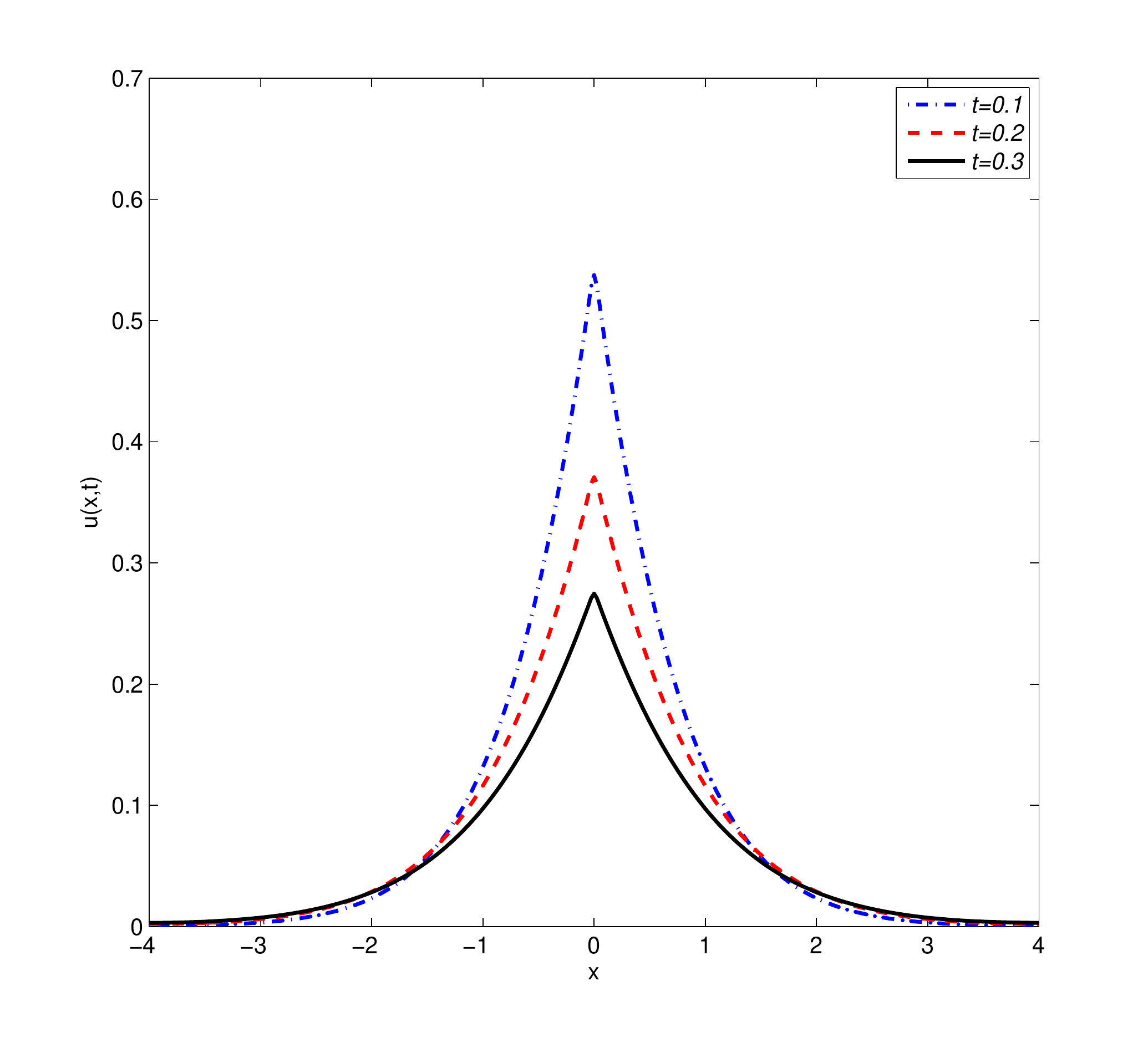}
        \end{minipage}
        }  \hspace*{-20pt}
        \subfigure[$\alpha=0.8, \lambda=2$]{
        \label{fig:mini:subfig:b} %% label for second subfigure
        \begin{minipage}[b]{0.4\textwidth}
            \centering
            \includegraphics[scale=.25]{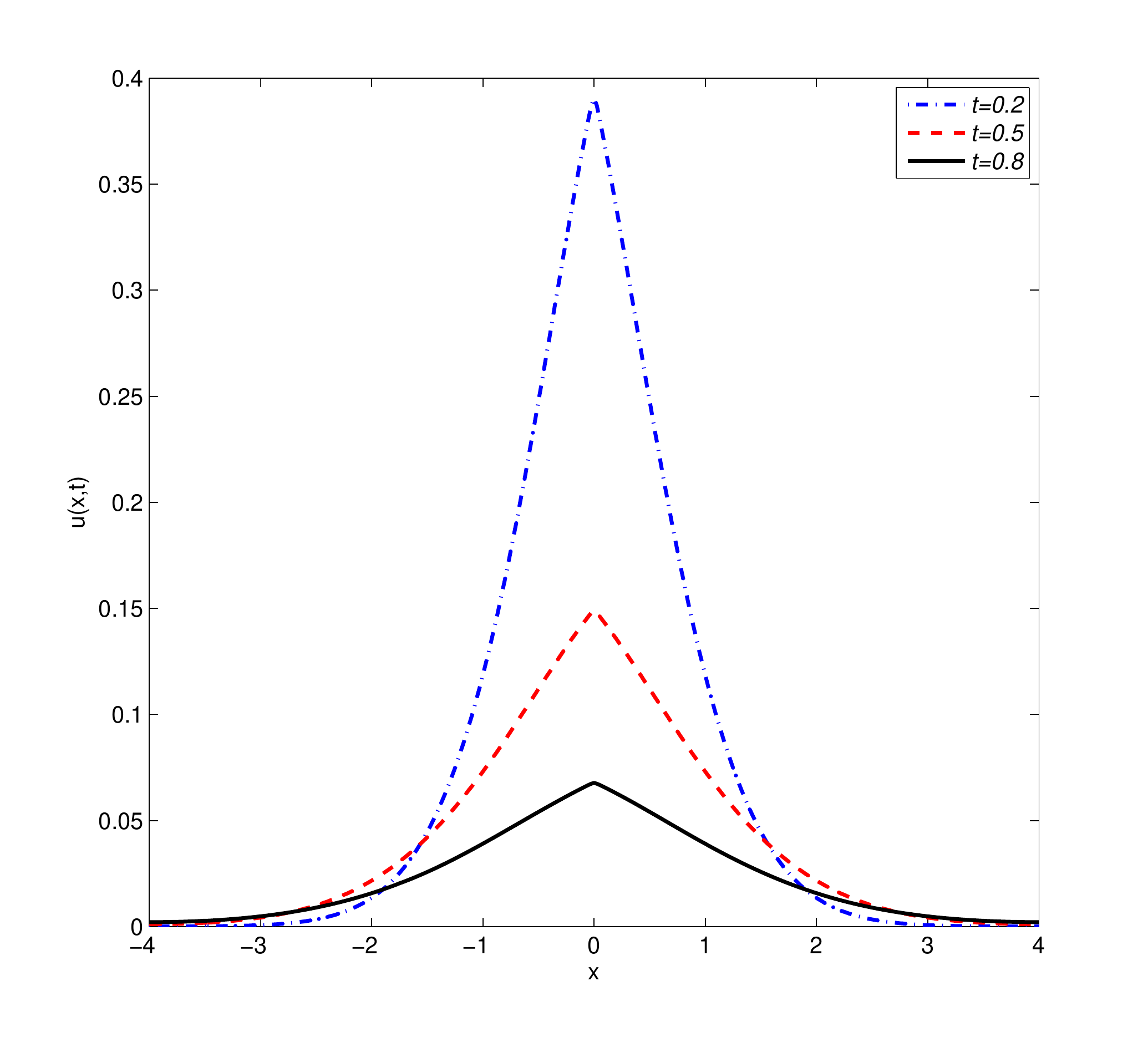}
        \end{minipage}
        }  \vspace*{-15pt}
    \caption{The evolution of $u(x,t)$ under different parameters $\alpha, \lambda$
    at different times.}\label{fig:example3}
\end{figure}
%%%%%%%%%%%%%%%%%%%%%%%%%%%%%%%%%%%%%%%
%%%%%%%%%%%%%%%%%%%%%%%%%%%%%%%%%%%%%%%
\section{Conclusions}\label{sec:6}
 We have presented a numerical method for a time fractional tempered diffusion equation. The proposed method is based on a combination of the weighted and shifted Lubich difference approaches in
the time direction and a LDG method in the space direction. The convergence
rate of the method is proven by providing a priori error estimate, and confirmed by
a series of numerical tests. It has been proved that
the proposed scheme is unconditionally stable and of $q$-order convergence in time and $k+1$-order convergence in
space. Some numerical experiments
have been carried out to support the theoretical results.
\\
\section*{Acknowledgments}  This research was partially
supported by the National Natural Science Foundation of China under Grant No.11426174, the Starting Research Fund from the Xi¡¯an university of Technology under Grant Nos. 108-211206, 2014CX022, the Natural
Science Basic Research Plan in Shaanxi Province of China under Grant No.2015JQ1022,
the Shaanxi science and technology research projects under Grant No.2015GY004.
%\bibliographystyle{abbrv}
%%
%\bibliography{xuzhangBib.bib}

\begin{thebibliography}{99}

\bibitem{Henry:06}B.I. Henry, T.A.M. Langlands, S.L. Wearne, Anomalous diffusion with linear reaction dynamics: From continuous time random walks to fractional reaction-diffusion equations, { Phys. Rev. E} {74}(2006) 031116.

\bibitem{Langlands:08}T.A. Langlands, B.I. Henry, S.L. Wearne,
 Anomalous subdiffusion with multispecies linear reaction dynamics,
 {Phys. Rev. E} {77} (2008) 021111.

\bibitem{Podlubny:99}I.~Podlubny,~Fractional differential equations,
 Academic Press, San Diego, 1999.

\bibitem{Sabzikar:15}
F. Sabzikar, M.M. Meerschaert,J.H. Chen, Tempered fractional calculus, J. Comput. Phys., 293 (2015) 14-28.

\bibitem{Li:14} C. Li, W.H. Deng,
 High order schemes for the tempered fractional diffusion equations, {Adv. Comput. Math.} {42} (2014) 543-572.

\bibitem{Meerschaert:04}
M.M. Meerschaert, C. Tadjeran, Finite difference approximations for fractional advection-dispersion flow equations, J. Comput. Appl. Math. 172 (1)
(2004) 65-77.

\bibitem{Sun:16} Z.Z. Sun, X.N. Wu, A fully discrete difference scheme for a diffusion-wave system, Appl. Numer. Math. 56 (2006) 193-209.

\bibitem{Murillo:15}J.Q. Murillo, S.B. Yuste,
 On three explicit difference schemes for fractional diffusion and diffusion-wave equations, {Phys. Scr.} {136} (2009) 14025-14030.

\bibitem{Liu:15} F.W. Liu, P.H. Zhuang, Q.X. Liu, The Applications and Numerical Methods of Fractional Differential Equations, Science Press, Beijing, 2015.

\bibitem{Sousa:15} E. Sousa, C. Li, A weighted finite difference method for the fractional diffusion equation based on the Riemann-Liouville derivative, Appl. Numer. Math. 90 (2015) 22-37.

\bibitem{Gracia:15} J.L. Gracia, M. Stynes, Central difference approximation of convection in Caputo fractional derivative two-point boundary value problems, J. Comput. Appl. Math. 273 (2015) 103-115.


\bibitem{Ervin:05} V.J. Ervin, J.P. Roop, Variational formulation for the stationary fractional advection dispersion equation, Numer. Methods for Partial Differential Equations 22(2005)  558-576.

\bibitem{Wang:14} H. Wang, D. Yang, S. Zhu, Inhomogeneous Dirichlet boundary-value problems of space-fractional diffusion equations and their finite element approximations. SIAM J. Numer. Anal.  52 (2014)  1292-1310.

\bibitem{Li:15a} C.P. Li, F.H. Zeng, Numerical methods for fractional calculus, CRC Press, Boca Raton, FL, 2015.

\bibitem{Zhao:15}
Y.M. Zhao, W.P. Bu, J.F. Huang, D.Y. Liu, Y.F. Tang, Finite element method for two-dimensional space-fractional advection-dispersion equations, Appl.
Math. Comput. 257 (2015) 553-565.

\bibitem{Jin:16} B. Jin, R. Lazarov, Z. Zhou, An analysis of the L1 scheme for the subdiffusion equation with nonsmooth data, IMA J. Numer. Anal. 36 (2016) 197-221.

\bibitem{Lin:07} Y.M. Lin, C.J. Xu, Finite difference/spectral approximations for the time-fractional diffusion equation, J. Comput. Phys. 225 (2007) 1533¨C1552.

\bibitem{Wang:16} S. Chen, J. Shen, L.L. Wang,
  Generalized Jacobi functions and their applications to fractional differential equations,  Math. Comp. 85 (2016) 1603-1638.

\bibitem{Baeumera:10}B. Baeumera, M.M. Meerschaert,  Tempered stable L\'{e}vy motion and transient super-diffusion, J. Comput. Appl. Math.  {233}(2010) 2438-2448.

\bibitem{Cartea:07a}
\'{A}. Cartea, D. del-Castillo-Negrete,  Fractional diffusion models of option prices in markets with jumps, Phys. A 374(2007) 749-763.

\bibitem{Zhang:17} H. Zhang, F. Liu, I. Turner, S. Chen, The numerical simulation of the tempered fractional Black-Scholes equation for European double barrier option, Appl. Math. Model. 40(2016) 5819-5834.

\bibitem{Hanert:02}E. Hanert, C. Piret,
 A Chebyshev pseudospectral method to solve the space-time tempered fractional diffusion equation, {SIAM J. Sci. Comput.} {36} (2014) 1797-1812.

\bibitem{Zayernouri:15}
M. Zayernouri, M. Ainsworth, G. Karniadakis, Tempered fractional Sturm-Liouville eigenproblems, SIAM J. Sci. Comput. 37 (4) (2015) A1777-A1800.

\bibitem{Huang:14}
C. Huang, Q. Song, Z.M. Zhang, Spectral collocation method for substantial fractional di
erential equations.\href{http://arxiv.org/abs/1408.5997v1}{arXiv:1408.5997v1 [math.NA] 26 Aug 2014}

\bibitem{Li:15} C. Li, W. H. Deng, L. Zhao,
 Well-posedness and numerical algorithm for the tempered fractional ordinary differential equations, \href{http://arxiv.org/abs/1501.00376v1}{arXiv:1501.00376v1 [math.CA] 2 Jan 2015}

\bibitem{Yu:06}Y.Y. Yu, W. H. Deng, Y.J. Wu,
 Third order difference schemes (without using points outside of the domain) for one sided space tempered fractional partial differential equations, {Appl. Numer. Math.} {112} (2017) 126-145.

\bibitem{Hao:17}
 Z. Hao, W. Cao, G. Lin, A second-order difference scheme for the  time fractional substantial diffusion equation, {J. Comput. Appl. Math.} {313} (2017) 54-69.


\bibitem{Zhao:16} L. Zhao, W. H. Deng, J. S. Hesthaven ,
 Spectral methods for tempered fractional differential equations,
  \href{http://arxiv.org/abs/1603.06511v1}{arXiv:1603.06511v1 [math.NA] 21 Mar 2016}



\bibitem{Tian:15} W. Y. Tian, H. Zhou, W. H. Deng,
 A class of second order difference approximations for solving space fractional diffusion equations, {Math. Comput.} {294} (2012) 1703-1727.


\bibitem{Chen:14}M. H. Chen, W. H. Deng,
 Fourth order difference approximations for space Riemann
 -Liouville derivatives based on weighted and shifted Lubich difference operators, {Commun. Comput. Phys.} {16} (2014) 516-540.

\bibitem{Chen:15} M. H. Chen, W. H. Deng, E. Barkai,
 Numerical algorithms for the forward and backward
 fractional Feynman-Kac equations, {J. Sci. Comput.} {62} (2015) 718-746.

\bibitem{Wang:10} H. Wang, K. Wang, T. Sircar, A direct $O(N \log2N)$ finite difference method for fractional diffusion equations, J. Comput. Phys. 229 (2010) 8095-8104.


\bibitem{Jiang:17} S. Jiang, J. Zhang, Q. Zhang, Z. Zhang, Fast evaluation of the Caputo fractional derivative and its applications to fractional diffusion equations. Commun. Comput. Phys. 21 (2017) 650-678.

\bibitem{Cockburn:98} B. Cockburn, C.-W. Shu,
 The local discontinuous Galerkin method for time-dependent
 convection-diffusion systems, {SIAM J. Numer. Anal.} {35} (1998) 2440-2463.

\bibitem{Hesthaven:08}
J.S. Hesthaven, T. Warburton,  Nodal Discontinuous Galerkin Methods. Algorithms, Analysis, and Applications. Springer, Berlin, 2008.

\bibitem{Cockburn:00} B. Cockburn, G. Karniadakis, C.-W. Shu,
 The development of discontinuous Galerkin methods, in Discontinuous Galerkin Methods: Theory, Computation and Applicatons, B. Cockburn G. Karniadakis and C.-W. Shu, editors,
 Lecture Notes in Computational Science and Engineering, volume 11, Springer, 2000, Part I: Overview, 3-50.

\bibitem{Xu:10}
Y. Xu, C.-W. Shu, Local discontinuous Galerkin methods for high-order time-dependent partial differential equations, Comm. Comput. Phys. 7 (2010) 1-46.

\bibitem{Shu:16}
C.-W. Shu,  High order WENO and DG methods for time-dependent convection-dominated PDEs: a brief survey of several recent developments , J. Comput. Phys. 316 (2016) 598-613.

\bibitem{Mustapha:10} K. Mustapha, W. McLean, Piecewise-linear, discontinuous Galerkin method for a fractional diffusion equation, Numer. Algorithms 56 (2010) 159-184.


\bibitem{Xu:13}Q. Xu, Z. Zheng, Discontinuous Galerkin method for time fractional
    diffusion equation, J. Informat. Comput. Sci. 10 (2013) 3253-3264.



\bibitem{Wei:12} L. Wei, X. Zhang, Y. He, S. Wang, Analysis of an implicit fully discrete local discontinuous Galerkin method for
the time-fractionalSchr \"{o}dingerequation, Finite Elem. Anal. Desi. 59(2012)28-34.



\bibitem{Guo:16} L. Guo, Z. B. Wang, S. Vong,
 Fully discrete local discontinuous Galerkin methods for some time-fractional fourth-order problems, {Int. J. Comput. Math.} {93} (2016) 1665-1682.

\bibitem{Liu:14} Y. Liu, M. Zhang, H.Li, J.C.Li,
 High-order local discontinuous Galerkin method combined with WSGD-approximation for a fractional subdiffusion equation, {Comput. Math. Appl.} {73} (2017) 1298-1314.



\bibitem{Ji:12} X. Ji, H. Z.Tang, High-order accurate Runge-Kutta (local) discontinuous Galerkin methods for one-and two-dimensional fractional diffusion equations, Numer. Math. Theor. Meth. Appl. 5(2012) 333-358.

\bibitem{Deng:13} W.H. Deng, J.S. Hesthaven, Local discontinuous Galerkin methods for fractional diffusion equations, ESAIM Math. Model. Numer. Anal. 47(2013)
1845-1864.

\bibitem{Alikhanov:01}
A.A. Alikhanov, {A priori estimates for solutions of boundary value problem for fractional-order equations},
Diff.Eq. {46} (2010) 660-666.

\bibitem{Dixo:86} J.Dixo, S. Mckee,  Weakly singular discrete gronwall inequalities, Z. angew. Math. Mech. 66(1986) 535-544.

\bibitem{Kirby:14} R. M. Kirby, G. E. Karniadakis,
 Selecting the numerical flux in discontinuous
Galerkin methods for diffusion problems, {J. Sci. Comput.} {22} (2005) 385-411.


\end{thebibliography}

\section*{Reference}
%%%% Bibliography  %%%%%%%%%%

\end{document}